\documentclass[reqno,12pt]{amsart} 
   % AMS-classes:   amsart, amsbook, amsproc - disse classes
   %                loader amsmath, amsgen, amstext, amsbsy, 
   %                amsopn, amsfonts og amsthm automatisk og tager derfor
   %                options til amsmath
   % LaTeX-classes: article, book, report, letter, slides, proc
   %
   % Alle disse classes (og mange andre) er beskrevet paa WWW-siden:
   % <a href="file://localhost/usr/unic/share/texmf/1.0/doc/helpindex.html">The TeX system</a>

\NeedsTeXFormat{LaTeX2e}[1994/12/01]

%-------------Hvilke pakker vil vi/jeg bruge------------------------------------

\usepackage{vmargin}
\setpapersize{A4}
        %Man kan saa selv saette marginener. Format:
        %\setmarginsrb{leftmargin}{topmargin}{rightmargin}{bottommargin}{headheight}{headsep}{footheight}{footskip}
%        \setmarginsrb{2.5cm}{1.5cm}{2.5cm}{2cm}{1cm}{.5cm}{2cm}{2cm}

%\usepackage{times} % avant|bookman|chancery|courier|helvet|mathptm|newcent|palatino|pifont|times|utopia       
   %   Hvis du vil have andre fonte end standard TeX fonte. 
   %   Jeg kan isaer anbefale times og palatino fontene.

\usepackage{amsmath}

   %   (skal altid loades hvis man vil lave
   %   noget som helst matematik)

    %\usepackage{amsbsy}      
   %   Skaffer adgang til "fede" symboler

\usepackage{amsfonts}   
   %   Skaffer adgang til de specielle fonts \mathbb \frak m.fl.

\usepackage{amssymb}      
   %   flere symboler

\usepackage{eufrak}      
   %   Giver adgang til gotiske bogstaver: {\frak a} = gotisk a

\usepackage{color}
%\usepackage{eucal}   
   %   \mathcal bliver anderledes   

%\usepackage{amsopn}      
   %   Pakke til at deklarere nye operatorer

%\usepackage{upref}      
   %   Referencer kommer altid til at staa oprejst
   %   selvom det omgivende tekst evt. er i kursiv
   %   inkluderes helst ikke; giver fejlmelding som
   %   man dog kan ignorere.

%\usepackage{amsxtra}    
   %   Skaffer ekstra sjaeldent brugte tegn
   %   inkluderes kun i noedstilfaelde!!

\usepackage{amscd}      
   %   Kommutative diagrammer med \begin{CD}..\end{CD}
   %   Meget smart!!
   %   Eksempel: 
   %\begin{equation}
   %\begin{CD}
   %   A @>i>>   B \\   
   %   @VjVV      @VVkV \\
   %   C   @>>m>   D
   %\end{CD}
   %\end{equation}
%\usepackage{graphicx}
\usepackage{amsthm}      
   %   Skaffer proof-environment + \newtheorem
   %   Dvs. nye theorem strukturer defineres med
   %   \newtheorem{lem}{Lemma} og kan saa kaldes
   %   med 
   %   \begin{lem}....
   %   \begin{proof} ...
   %   \end{proof}
   %   \end{lem}

\usepackage{tikz,amsmath}
\usetikzlibrary{arrows}
\usepackage{tikz}
\usetikzlibrary{matrix,calc}

\usepackage{epsfig}      
   %   Goer det muligt at inkludere eps-filer
   %   lavet i tegne/male-programmer

\usepackage{amstext}      
   %   Skaffer \text{} saadan at man inde midt i noget matematik kan skrive
   %   noget decideret text: \text{og der med ogsaa }

\usepackage{graphicx}

\usepackage[all,line,dvips]{xy} 
\CompileMatrices %goer at xy-matricerne koerer hurtigere.

\newcommand{\Span}{\operatorname{Span}}
\newcommand{\Tr}{\operatorname{Tr}}

\newcommand{\T}{{\mathbb{T}}}

%\newcommand{\N}{\mathbb{N}}

% \newcommand{\Span}{\operatorname{Span}}

%  Shortcuts for finite sums and products

%\DeclareMathOperator{\supp}{supp}

%  Definer evt. selv flere paa samme maade,
%  eller slet de som du ikke skal bruge

%----------Saetnings strukturer og nummerering af disse-------------------------

%  Hvilke saetninger, lemmaer etc. og hvordan
%  Paa denne maade faar de alle en faelles nummerering:
   \theoremstyle{plain}%default
   \newtheorem{thm}{Theorem}[section]
   \newtheorem{prop}[thm]{Proposition}
   \newtheorem{lemma}[thm]{Lemma}  
   \newtheorem{cor}[thm]{Corollary}
   \theoremstyle{definition}
   
   \newtheorem{defn}[thm]{Definition}
   
   \theoremstyle{remark}
   
   \newtheorem{remark}[thm]{Remark}

%   Paa denne maade faar de hver sin taeller:
%   \theoremstyle{plain}%default
%   \newtheorem{thm}{Theorem}
%   \newtheorem{prop}{Proposition}
%   \newtheorem{lemma}{Lemma}
%   \newtheorem{cor}{Corollary}
%   \theoremstyle{definition}
%   \newtheorem{defn}{Definition}
%   \newtheorem{example}{Example}
%   \newtheorem{exerc}{Exercise}
%   \theoremstyle{remark}
%   \newtheorem{obs}{Observation}
%   \newtheorem{remark}{Remark}
%   \numberwithin{thm}{section}
%   \numberwithin{lemma}{section}
%   \numberwithin{cor}{section}
%   \numberwithin{defn}{section}
%   \numberwithin{example}{section}
%   \numberwithin{prop}{section}
%   \numberwithin{exerc}{section}
%   \numberwithin{obs}{section}
%   \numberwithin{remark}{section}

%   Et par makroer til at referere til disse saetninger etc.
%   Saa kan du skrive: jvf. \refthm{yndlings_label} har vi....

\usepackage{mdframed,xcolor}

\definecolor{mybgcolor}{gray}{0.8}
\definecolor{myframecolor}{rgb}{.647,.129,.149}

\mdfdefinestyle{mystyle}{
  usetwoside=false,
  skipabove=0.6em plus 0.8em minus 0.2em,
  skipbelow=0.6em plus 0.8em minus 0.2em,
  innerleftmargin=.25em,
  innerrightmargin=0.25em,
  innertopmargin=0.25em,
  innerbottommargin=0.25em,
  leftmargin=-.75em,
  rightmargin=-0em,
  topline=false,
  rightline=false,
  bottomline=false,
  leftline=false,
  backgroundcolor=mybgcolor,
  splittopskip=0.75em,
  splitbottomskip=0.25em,
  innerleftmargin=0.5em,
  leftline=true,
  linecolor=myframecolor,
  linewidth=0.25em,
}

\newmdenv[style=mystyle]{important}

% fyldtekst
\usepackage{lipsum}

%   Og tilsvarende hvis det skal vaere paa dansk:
%   (Faelles taeller)
%   \newenvironment{DKproof}{\proof[\DKproofname]}{\endproof}
%   \newcommand{\DKproofname}{Bevis}
%   \theoremstyle{plain}%default
%   \newtheorem{thm}{S\ae tning}[section]
%   \newtheorem{prop}[thm]{Proposition}
%   \newtheorem{lemma}[thm]{Lemma}
%   \newtheorem{cor}[thm]{Korollar}
%   \theoremstyle{definition}
%   \newtheorem{exerc}[thm]{\O velse}
%   \newtheorem{defn}[thm]{Definition}
%   \newtheorem{example}[thm]{Eksempel}
%   \theoremstyle{remark}
%   \newtheorem{obs}[thm]{Observation}
%   \newtheorem{remark}[thm]{Bem\ae rkning}

%   \newcommand{\refthm}[1]{S\ae tning~\ref{#1}}
%   \newcommand{\refprop}[1]{Proposition~\ref{#1}}
%   \newcommand{\reflemma}[1]{Lemma~\ref{#1}}
%   \newcommand{\refcor}[1]{Korollar~\ref{#1}}
%   \newcommand{\refdefn}[1]{Definition~\ref{#1}}
%   \newcommand{\refremark}[1]{Bem\ae rkning~\ref{#1}}
%   \newcommand{\refexample}[1]{Eksempel~\ref{#1}}

%   Kompabilitet med amslatex pf-environment
%   \newenvironment{pf}{\proof[\proofname]}{\endproof}
%   \newenvironment{pf*}[1]{\proof[#1]}{\endproof}

%   Ligningsnumre som (section-nr.eq-nr)
   \numberwithin{equation}{section}

%   Hvis man vi have "2.3 Theorem" i stedet for "Theorem 2.3"
%   \swapnumbers

%------------------Diverse hints og tricks--------------------------------------

%   Hvis man har noget LaTeX2e skrevet af MapleV4 (bemaerk: version 4) kan det
%   inkluderes paa foelgende maade:
%   \usepackage{/vol/packages/MapleV4/etc/inputs2e/maple2e}
%   \DefineParaStyle{Maple Output}
%   \DefineParaStyle{Warning}
%   \DefineCharStyle{2D Math}
%   \DefineCharStyle{2D Output}

%   \input{maple-filen.tex}
%   Husk at fjerne toppen (til og med begin{document}) og bunden (\end{document} 
%   af filen som du inkluderer.

%   \makeindex
%   Hvis der skal laves et index, inkluder da "makeidx".
%   Bemaerk: Der kan IKKE laves index i "article" og "amsart" styles.
%   Dette kan dog omgaas:
%      For at faa indexet printet ud, skal man enten vaelge optionen
%   amsbook eller ogsaa skal idx-filen som genereres af \makeindex
%   compileres som "makeindex fil.idx -p X", hvor X angiver det sidenr
%   hvor indexet skal begynde. Derefter TeX'es indexgeneratorfilen
%   Indexgenerator.tex (se forklaring i denne). Derved faas det oenskede
%   output.

%   \showidx
%   Hvis der laves index er det rart i skrive-processen
%   at have indexene staaende i marginen - dette klarer "showidx".

%\usepackage{showkeys}
%   Denne kommando viser alle label's skrevet
%   ud i teksten (i box) - rart til kladder.

        %Lav evt. dato-teksten om, eller undlad helt dato.
        \date{\today}
        %\thanks{My warmest thanks to my mom}
        %\email{matkt@imf.au.dk}
        %\address{Institut for matematiske fag, Ny Munkegade, 8000 Aarhus C, Denmark}
%---------Nu til forsiden-------------------------------------------------------

\title[Finite digraphs and KMS states]{Finite digraphs and KMS states}
\author{Johannes Christensen and Klaus Thomsen}
%Nu kommer nogle linier som kun er aktuelle hvis du bruger
%'amsart' style

        %\renewcommand{\datename}{Version: }
        %Lav evt. dato-teksten om, eller undlad helt dato.
        %\date{\today
        
        %\email{matkt@imf.au.dk}
        %\address{Institut for matematiske fag, Ny MunkeJohanngade, 8000 Aarhus C, Denmark}
        %\dedicatory{Dedicated to S\o ren Have Hansen on the occasion of his birth}
        %\keywords{infinite matrix, positive eigenvalue, positive eigenvector}
        %\subjclass[2010]{15B48, 05C63}
        %\commby{Jens Peter S\ae r}

%Slut paa  'amsart' style specifikke ting.

\date{\today}

\email{matkt@math.au.dk; jollech90@hotmail.com}
\address{Department of Mathematics, Aarhus University, Ny Munkegade, 8000 Aarhus C, Denmark}

\begin{document}

\maketitle

\section{Introduction} 

In a recent paper by an Huef, Laca, Raeburn and Sims, \cite{aHLRS}, the authors
describe an algorithm by which it is possible to determine all the KMS
states of the gauge action on the $C^*$-algebra of a finite
graph. Their results cover also the gauge action on the Toeplitz
extension of the algebra and extend the result of Enomoto, Fujii and
Watatani, \cite{EFW}, which deals with strongly connected
graphs. Almost simultaneously with this work, Carlsen and Larsen obtained an abstract description of the
KMS states for some of the generalized gauge actions on the
$C^*$-algebra of a finite graph as well as its Toeplitz
extension. Their work build on and extend methods and results obtained
by Exel and Laca in
\cite{EL} and bring our knowledge about the KMS states of
the actions they consider to the point where the work on the gauge
action begins in \cite{aHLRS}. It is the purpose of the present paper to take the
steps from the abstract to the concrete which were taken by an Huef,
Laca, Raeburn and Sims, but now for all the generalized gauge actions.

The point of departure for our work are results of the second author
from \cite{Th3} from which it follows that the relevant results of
Carlsen and Larsen from \cite{CL} remain valid for all generalized
gauge actions, provided attention is restricted to the KMS states that
are gauge invariant; a condition which is automatically satisfied for
the actions considered by Carlsen and Larsen. What we do first is to
develop the approach from \cite{aHLRS} to make it applicable to
generalized gauge actions. In this way we obtain a description of the
gauge invariant KMS states for all generalized gauge actions. The main
input for this is a generalization of the Perron-Frobenius theory for
positive matrices which was obtained by Victory, \cite{Vi}. See also
\cite{Ta}. The theory handles arbitrary finite non-negative matrices and can also
be used to simplify some of the steps in \cite{aHLRS}. We give here a new
proof of the relevant results from \cite{Vi} and \cite{Ta} by using
ideas from \cite{aHLRS}.

In order to identify the KMS states that are not gauge invariant we
use results by Neshveyev, \cite{N}, in a form presented in
\cite{Th1}. By combining the result with our study of the gauge
invariant KMS states we obtain in Theorem \ref{MAIN} our main result which
describes the $\beta$-KMS states for all $\beta \in \mathbb R
\backslash \{0\}$ and for
an arbitrary
generalized gauge action on the $C^*$-algebra of a finite graph. As
with the gauge action, \cite{aHLRS}, it is a sub-collection of the components and
the sinks in the graph that parametrize the extremal KMS states,
although in general some of the components, corresponding to a loop
without exits, may contribute a family of extremal KMS states
parametrized by a circle. Which components and sinks play a
role depends very much on the action, as we illustrate by examples.

It is
intrinsic to our approach that the case $\beta = 0$, where the KMS
states are the trace states, must be handled separately as we do in
Section \ref{traces}. For completeness we describe also in a final section the ground states for
the same actions. While there are no ground states for the gauge
action unless the graph has sinks, this is no longer the case for
generalized gauge actions and even for strongly connected graphs their
structure can be very rich. 

\smallskip
  
\emph{Acknowledgement.} The authors thank Astrid an Huef and Iain Raeburn for
          discussions on the subject of this paper.

%The description we obtain of the gauge invariant KMS states depends a
%result of Victory, \cite{Vi}, in a form given in \cite{Ta} where it is
%called 'The Frobenius-Victory theorem'. We present a direct proof
%which appears to much shorter than the proof in \cite{Vi}.

\section{Preparations}\label{sec1} Let $G$ be a finite directed graph with vertex
set $V$ and edge set $E$. The maps $r,s : E \to V$ associate to an
edge $e\in E$ its source vertex $s(e)\in V$ and range vertex $r(e) \in
V$. Thus the set of edges emitted from a vertex $v$ is the set
$s^{-1}(v)$ while $r^{-1}(v)$ is the set of edges terminating at
$v$. A \emph{sink} in $G$ is a vertex $v$ that does not emit an edge,
i.e. $s^{-1}(v) = \emptyset$. 

Formulated in terms of generators and relations the $C^*$-algebra
$C^*(G)$ of $G$ is the universal $C^*$-algebra
generated by a set $S_e, e \in E$, of partial isometries and a set
$P_v, v \in V$, of mutually orthogonal projections such that
\begin{equation}\label{CKrel}
\begin{split}
& 1) \ \ \ S_e^*S_e = P_{r(e)}  \ \forall e \in E, \ \text{and} \\
& 2) \ \ \ P_v = \sum_{e \in s^{-1}(v)} S_eS_e^* \ \text{for every vertex} \
v
  \in V \ \text{which is not a sink.}
\end{split}
\end{equation}
A finite path $\mu$ in $G$ is an element $\mu = e_1e_2\cdots e_n \in
E^n$ for some $n \in \mathbb N$ such that $r(e_i) = s(e_{i+1}), i =
1,2,\cdots,n-1$.   
%{\color{blue} For $X,Y \subseteq V$ and $n \in
                 %\mathbb{N}$ we define $XE^{n}Y=\{\alpha \in E^{n} \
                 %| \ s(\alpha) \in X, r(\alpha) \in Y \}$, and we use
                 %the abriviation $XEY$ for $XE^{1}Y$ and $XE^{*}Y$
                 %for $\bigcup_{n=1}^{\infty} XE^{n}Y$, and we write
                 %e.g. $XE$ for $XEV$, $E^{n}Y$ for $VE^{n}Y$ and so
                 %on.} 
For such a path we set
$$
S_{\mu} = S_{e_1}S_{e_{2}}  \cdots S_{e_{n-1}}S_{e_n} .
$$
The number $|\mu| = n$ is the length of the path. We
consider a vertex $v$ as a path $\nu$ of length $0$, and set $S_{\nu}
= P_v$ in this case. Let $P_f(G)$ denote the set of finite paths in
$G$. Then
\begin{equation}\label{span}
\mathcal A = \Span \left\{ S_{\mu}S_{\nu}^*: \ \mu, \nu \in P_f(G) \right\}
\end{equation}
is a dense $*$-subalgebra of $C^*(G)$. %In particular, a continuous
%linear functional on $C^*(G)$ is determined by the values it takes on
%elements of form
%$S_{\mu}S^*_{\nu}$. 

Let $F : E \to \mathbb R$ be a function. The universal property of $C^*(G)$
guarantees the existence of a one-parameter group $\alpha^F_t, t\in
\mathbb R$, on $C^*(G)$
such that
$$
\alpha^F_t(P_v) = P_v \ \forall v \in V, \ \ \ \text{and} \ \ \ \alpha^F_t(S_e)
= e^{iF(e)t} S_e \ \forall e \in E .
$$
For $\beta \in \mathbb R$ a \emph{$\beta$-KMS state} for $\alpha^F$ is
a state $\varphi$ on $C^*(G)$ such that
$$
\varphi(ab) = \varphi\left(b \alpha^F_{i\beta}(a)\right)
$$
for all $a,b \in \mathcal A$, cf. Definition 5.3.1 in \cite{BR}. When $F$ is constant $1$ the automorphism group
$\left\{\alpha^1_t\right\}$ is the so-called \emph{gauge
  action} and we study first the gauge invariant KMS states for $\alpha^F$, i.e the
KMS states $\varphi$ for $\alpha^F$ with the additional property that
$\varphi \circ \alpha^1_t= \varphi$ for all $t\in \mathbb R$. For this
purpose we use the following description of the gauge invariant KMS
states. It was obtained by Carlsen and Larsen in \cite{CL} when $F$ is
strictly positive (in which case all KMS states for $\alpha^F$ are
gauge-invariant). The general case follows from Theorem 2.8 in \cite{Th3}.

Let $B$ be a non-negative matrix over $V$ with the property that
$B_{vw} > 0$ iff there is an edge in $G$ from $v$ to $w$. A vector $\psi \in [0,\infty)^V$ is \emph{almost harmonic} for $B$
(or \emph{almost $B$-harmonic}) when
\begin{equation}\label{a77}
\sum_{w \in V} B_{vw} \psi_w = \psi_v \ 
\end{equation}
for every vertex $v \in V$ which is not a sink, 
and \emph{normalized} when $\sum_{v \in V} \psi_v =1$. When the
identity (\ref{a77}) holds for all $v \in V$ we say that $\psi$ is
\emph{harmonic} for $B$ (or \emph{$B$-harmonic}). Thus an almost
$B$-harmonic vector $\psi$ is $B$-harmonic if and only if $\psi_s
=0$ for every sink $s \in V$. For $\beta \in
\mathbb R$, consider the matrix $A(\beta) =
\left(A(\beta)_{vw}\right)$ over $V$ defined such that
$$
A(\beta)_{vw} \ = \ \sum_{e \in vEw}  e^{-\beta F(e)} ,
$$  
where $vEw = s^{-1}(v) \cap r^{-1}(w)$. For a finite path $\mu  = e_1e_2 \cdots e_n $ in $G$, set
$$
F(\mu) = F(e_1) + F(e_2) + \cdots + F(e_n) .
$$

\begin{lemma}\label{a2} (\cite{CL}, \cite{Th3}) For every normalized $A(\beta)$-almost
  harmonic vector $\psi$ there is a unique gauge invariant $\beta$-KMS state
  $\omega_{\psi}$ for $\alpha^F$ such that
\begin{equation}\label{a3}
\omega_{\psi}\left(S_{\mu} S_{\nu}^*\right) = \delta_{\mu, \nu}
e^{-\beta  F(\mu)} \psi_{r(\mu)}
\end{equation}
for every pair $\mu,\nu$ of finite paths in $G$. Furthermore, every
gauge invariant $\beta$-KMS state for $\alpha^F$ arises from a normalized $A(\beta)$-almost
  harmonic vector in this way.
\end{lemma}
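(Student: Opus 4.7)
Since $\mathcal A$ is dense in $C^*(G)$ and the family $\{S_\mu S_\nu^*\}$ spans it, uniqueness of $\omega_\psi$ is immediate from (\ref{a3}). There are two substantive directions.

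For the easy direction (every gauge invariant $\beta$-KMS state arises from some $\psi$), I would start with $\varphi$ a gauge invariant $\beta$-KMS state and define $\psi_v := \varphi(P_v)$. Normalization $\sum_v \psi_v = 1$ follows because $\sum_v P_v = 1$ in $C^*(G)$ (the vertex set is finite). For the formula (\ref{a3}): when $|\mu| \neq |\nu|$, gauge invariance $\varphi = \varphi \circ \alpha^1_t$ together with $\alpha^1_t(S_\mu S_\nu^*) = e^{it(|\mu|-|\nu|)} S_\mu S_\nu^*$ forces $\varphi(S_\mu S_\nu^*) = 0$; when $|\mu| = |\nu|$, analytic continuation gives $\alpha^F_{i\beta}(S_\mu) = e^{-\beta F(\mu)} S_\mu$, so the KMS condition yields $\varphi(S_\mu S_\nu^*) = e^{-\beta F(\mu)} \varphi(S_\nu^* S_\mu)$, and the partial-isometry relation forces $S_\nu^* S_\mu = \delta_{\mu,\nu} P_{r(\mu)}$. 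Almost harmonicity of $\psi$ at non-sink $v$ is then obtained by applying $\varphi$ to $P_v = \sum_{e \in s^{-1}(v)} S_eS_e^*$ and invoking (\ref{a3}).

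The existence direction is the substantive one. Given a normalized almost $A(\beta)$-harmonic vector $\psi$, I would build $\omega_\psi$ first on the gauge fixed-point subalgebra $\mathcal F := \mathcal A^{\alpha^1} = \Span\{S_\mu S_\nu^* : |\mu|=|\nu|\}$ and then pull back through the conditional expectation $E(a) := \frac{1}{2\pi}\int_0^{2\pi} \alpha^1_t(a)\,dt$. The subalgebra $\mathcal F$ is a union of finite-dimensional subalgebras $\mathcal F_n$, each of which decomposes as a direct sum over endpoint vertices $v$ of matrix algebras with matrix units $\{S_\mu S_\nu^* : r(\mu)=r(\nu)=v, |\mu|=|\nu|=n\}$ (where paths ending at sinks contribute blocks whose units are not refined at higher levels). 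Declare the functional on the $v$-block to be $e^{-\beta F(\mu)}\psi_{r(\mu)}$ times the standard trace, which is positive and has total mass $\sum_v \psi_v = 1$. Compatibility $\omega_\psi^{(n+1)}|_{\mathcal F_n} = \omega_\psi^{(n)}$ on diagonal units at a non-sink vertex $r(\mu)$ uses the almost harmonic identity applied to $r(\mu)$:
\[
e^{-\beta F(\mu)}\psi_{r(\mu)} = e^{-\beta F(\mu)}\sum_{e\in s^{-1}(r(\mu))} e^{-\beta F(e)}\psi_{r(e)} = \sum_{e} e^{-\beta F(\mu e)}\psi_{r(\mu e)},
\]
while at sinks no refinement occurs and compatibility is automatic. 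This is exactly why ``almost harmonic'' (rather than full harmonic) is the right condition. Setting $\omega_\psi := (\omega_\psi|_{\mathcal F})\circ E$ produces a state on $C^*(G)$ which is gauge invariant by construction and satisfies (\ref{a3}).

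It remains to verify the $\beta$-KMS condition for $\alpha^F$, and by density this reduces to checking $\omega_\psi(ab) = \omega_\psi(b \alpha^F_{i\beta}(a))$ for $a = S_\mu S_\nu^*$, $b = S_\sigma S_\tau^*$. Both sides expand via (\ref{a3}) and the Cuntz--Krieger relations to the same explicit multiple of $\delta$-conditions, and the check is a bookkeeping exercise. The principal technical obstacle is precisely the level-compatibility step in constructing the state on $\mathcal F$: this is where the almost harmonic condition is forced upon us, and where the careful treatment of sinks (which are permitted to carry mass without constraining $\psi$ through (\ref{a77})) enters. This is essentially the argument of Carlsen--Larsen extended to non-strictly-positive $F$ via the gauge-invariance hypothesis, as in Theorem 2.8 of \cite{Th3}.
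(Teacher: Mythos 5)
Your proof is correct, but it is worth noting that the paper itself offers no proof of Lemma \ref{a2}: it is quoted from \cite{CL} and from Theorem 2.8 of \cite{Th3}, where the result is obtained by quite different machinery (Carlsen and Larsen realize $C^*(G)$ as a partial crossed product and invoke the Exel--Laca analysis of KMS states for partial dynamical systems; Thomsen works in the groupoid picture and identifies gauge invariant KMS weights with conformal measures on the unit space $\Omega_G$). What you give instead is the classical direct argument in the spirit of Enomoto--Fujii--Watatani and \cite{aHLRS}: the ``arises from'' direction by evaluating a gauge invariant KMS state on $P_v$ and on $S_\mu S_\nu^*$, and the existence direction by building a compatible sequence of positive functionals on the finite-dimensional layers $\mathcal F_n$ of the gauge fixed-point algebra and composing with the averaging expectation $E$. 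The compatibility computation is exactly where the almost harmonic identity (\ref{a77}) at non-sink vertices is used, and your observation that sink blocks are simply carried along unrefined is the correct explanation of why ``almost harmonic'' rather than ``harmonic'' is the right hypothesis. Your approach buys a self-contained and elementary proof at the cost of generality: it uses that $G$ is finite (so $\sum_v P_v = 1$ and each $\mathcal F_n$ is finite-dimensional) and does not by itself produce the groupoid-level information (the conditional expectation $P$ onto $C(\Omega_G)$, conformal measures) that the paper needs later in Section \ref{groupoid} to handle the non-gauge-invariant states, which is presumably why the authors route the lemma through \cite{Th3}. Two small points to tighten: the functional on the $v$-block is not a scalar multiple of the standard trace but $A \mapsto \operatorname{Tr}(DA)$ for the diagonal density $D = \operatorname{diag}\left(e^{-\beta F(\mu)}\psi_v\right)_\mu$, since the weight depends on $\mu$; and the final KMS verification, while genuinely routine, does require running through the cases $\sigma = \nu\sigma'$ and $\nu = \sigma\nu'$ of the product $S_\nu^*S_\sigma$, so it deserves at least one displayed computation rather than only the word ``bookkeeping''.
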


%Thus there is an affine bijection between the gauge invariant $\beta$-KMS
%states for $\alpha^F$ and normalized $A(\beta)$-almost harmonic
%vectors. 
By Lemma \ref{a2} the study of the gauge invariant KMS states becomes a
study of normalized almost harmonic vectors for the family
$A(\beta),\beta \in \mathbb R$, of non-negative matrices over $V$.

\section{Almost harmonic vectors for a non-negative matrix}

Let $B$ be a non-negative matrix over $V$ with the property that
$B_{vw} > 0$ iff there is an edge in $G$ from $v$ to $w$. We seek to obtain a
description of the $B$-almost harmonic vectors. %Since the normalized
%$B$-almost harmonic vectors is a compact convex set this can be
%achieved by identifying the extremal points of this set. 

We shall need the following well-known lemma, cf. e.g. 6.43 in \cite{W}. 

\begin{lemma}\label{a6} (Riesz decomposition.) 
Let $\psi = (\psi_v)_{v \in V} \in [0,\infty[^V$ be a non-negative
vector such that 
\begin{equation*}\label{b2}
\sum_{w\in V} B_{vw} \psi_w \leq \psi_v
\end{equation*}
 for all $v\in V$. It follows that there are unique non-negative
 vectors $h, k \in [0,\infty[^V$ such that $h$ is $B$-harmonic
 and
\begin{equation}\label{b3}
\psi_v = h_v + \sum_{w \in V} \sum_{n=0}^{\infty} B^n_{vw}k_w 
\end{equation}
for all $v \in V$. The vector $k$ is given by
$$
k_v= \psi_v - \sum_{w \in V} B_{vw}\psi_w,
$$ 
while
$$
h_v = \lim_{n \to \infty} \sum_{w \in V} B^n_{vw}\psi_w .
$$ 
\end{lemma}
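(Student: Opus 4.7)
The plan is to exploit the finiteness of $V$ so that all sums are finite sums of non-negative reals, reducing the problem to a monotone-convergence argument in each coordinate. First I would \emph{define} the candidates: set $k_v := \psi_v - \sum_w B_{vw}\psi_w$, which is non-negative by hypothesis, and set $u^{(n)}_v := \sum_w B^n_{vw}\psi_w$, so that $u^{(0)} = \psi$ and $u^{(n+1)}_v = \sum_w B_{vw} u^{(n)}_w$.

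Next I would show by induction that the sequence $(u^{(n)}_v)_n$ is pointwise decreasing: the base case $u^{(1)} \leq u^{(0)}$ is the hypothesis, and multiplying $u^{(n)}_w \leq u^{(n-1)}_w$ by $B_{vw} \geq 0$ and summing over $w$ gives $u^{(n+1)}_v \leq u^{(n)}_v$. Being a decreasing sequence in $[0,\infty)$, it converges, and I define $h_v := \lim_n u^{(n)}_v$. Because $V$ is finite, the limit commutes with the sum $\sum_w$, so
$$\sum_w B_{vw} h_w \ = \ \lim_n \sum_w B_{vw} u^{(n)}_w \ = \ \lim_n u^{(n+1)}_v \ = \ h_v,$$
which shows that $h$ is $B$-harmonic.

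Then I would iterate the identity $\psi_v = k_v + \sum_w B_{vw}\psi_w$ to obtain, for every $n \geq 1$,
$$\psi_v \ = \ \sum_{j=0}^{n-1} \sum_w B^j_{vw} k_w \ + \ u^{(n)}_v.$$
Letting $n \to \infty$, the last term tends to $h_v$, while the double sum, being monotone non-decreasing in $n$, converges to $\psi_v - h_v$; thus the identity (\ref{b3}) holds coordinate-wise with the claimed formulas for $h$ and $k$. For uniqueness, if $\psi = h' + \sum_n B^n k'$ with $h'$ a $B$-harmonic vector and $k' \in [0,\infty)^V$, then applying $B$ coordinate-wise, using $Bh' = h'$ together with the absolute convergence of the double sum to permit reindexing, telescopes to $B\psi = h' + \sum_{n \geq 1} B^n k'$; subtracting forces $k' = \psi - B\psi = k$, and hence $h' = \psi - \sum_n B^n k = h$.

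The only point requiring any care is the monotone decrease of $(u^{(n)})_n$ and the interchange of $\lim_n$ with $\sum_w$ used in checking harmonicity of $h$; both are immediate from the finiteness of $V$ and non-negativity of $B$, so no serious obstacle appears.
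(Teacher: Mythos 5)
Your proof is correct and complete. The paper itself offers no argument for this lemma---it is quoted as well known with a reference to 6.43 in \cite{W}---and what you have written is exactly the standard argument that reference gives (specialized to a finite index set, which makes the interchange of $\lim_n$ with $\sum_w$ and the Tonelli reindexing in the uniqueness step immediate). The one point worth stating explicitly in the uniqueness part is that $\sum_{n\geq 1}\sum_w B^n_{vw}k'_w$ is finite before you subtract it from $\sum_{n\geq 0}\sum_w B^n_{vw}k'_w$; this follows from non-negativity of $h'$ and the assumed representation, since the full series is dominated by $\psi_v<\infty$. With that remark your argument is airtight.
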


We say that a sink $s \in V$ is \emph{$B$-summable} when
$$
\sum_{n=0}^{\infty} B^n_{vs} < \infty 
$$
for all $v \in V$. For such a sink we define a vector $\phi^s \in
[0,\infty)^V$ by
$$
\phi^s_v =  \frac{\sum_{n=0}^{\infty} B^n_{vs}}{\sum_{w \in V}
  \sum_{n=0}^{\infty} B^n_{ws}} .
$$

\begin{lemma}\label{a5} $\phi^s$ in an extremal normalized $B$-almost harmonic vector.
\end{lemma}
\begin{proof} The only assertion which may not be straightforward to
  verify is that $\phi^s$ is extremal in the set of normalized
  $B$-almost harmonic vectors. To show this, consider a $B$-almost harmonic vector $\varphi$ with the property that
  $\varphi \leq \phi^s$. Since 
\begin{equation}
\begin{split}
&B^m_{vw}\varphi_w \leq B^m_{vw}\phi^s_w  \ \leq \ 
\frac{\sum_{n=m}^{\infty} B^n_{vs}}{\sum_{w \in V}
  \sum_{n=0}^{\infty}B^n_{ws}} \ \to \ 0 
\end{split}
\end{equation}
as $m \to \infty$, it follows that the harmonic part from the Riesz
decomposition of $\varphi$ is zero. Thus
$$
\varphi_v =  \sum_{w \in V} \sum_{n=0}^{\infty} B^n_{vw}k_w 
$$
where $k_v = \varphi_v - \sum_{w \in V} B_{vw}\varphi_w$. Note that
$k_v = 0$ when $v$ is not a sink since $\varphi$ is $B$-almost
harmonic, and that $k_{s'}= \varphi_{s'}$ for every sink $s'$. Note also that $\phi^s_{s'} = 0$ for every sink
$s'$ in $G$ other than $s$. Since $\varphi \leq \phi^s$ it follows
that the same is true for $\varphi$. Hence 
$$
\varphi_v =   \sum_{n=0}^{\infty} B^n_{vs}\varphi_s = t
\phi^s_v,
$$
where 
$$
t = \varphi_s \sum_{w \in V}
  \sum_{n=0}^{\infty}B^n_{ws} .
$$ 
\end{proof}

By combining Lemma \ref{a6} and Lemma \ref{a5} we obtain the following
\begin{prop}\label{b2} Let $\psi$ be a normalized $B$-almost
  harmonic vector. There are a unique (possibly empty) set $\mathcal
  S$ of  summable sinks in $G$, unique positive numbers $t_s \in
  ]0,1], s \in \mathcal S$, and a unique $B$-harmonic
  vector $h$ such that
$$
\psi = h + \sum_{s \in \mathcal S} t_s \phi^s .
$$

\end{prop}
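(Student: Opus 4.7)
The plan is to obtain the decomposition by applying the Riesz decomposition of Lemma \ref{a6} to $\psi$ and then repackaging the ``potential'' part as a weighted sum of the extremal vectors $\phi^s$ provided by Lemma \ref{a5}. First I would check that Lemma \ref{a6} applies: since $\psi$ is $B$-almost harmonic and $B_{sw}=0$ for every sink $s$ (no outgoing edges, by the hypothesis on $B$), the inequality $\sum_w B_{vw}\psi_w \leq \psi_v$ holds at every $v$, with equality at every non-sink. Lemma \ref{a6} then produces a $B$-harmonic vector $h$ and an excess vector $k_v = \psi_v - \sum_w B_{vw}\psi_w$, which vanishes at every non-sink and equals $\psi_v$ at every sink, so $k$ is supported on the sinks $s$ with $\psi_s>0$.

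The second step is to set $\mathcal{S} = \{s : s \text{ is a sink and } \psi_s > 0\}$. Since $V$ is finite, the identity
$$
\psi_v = h_v + \sum_{s \in \mathcal{S}} \psi_s \sum_{n=0}^{\infty} B^n_{vs}
$$
from Lemma \ref{a6} forces $\sum_{n=0}^{\infty} B^n_{vs}<\infty$ at every $v\in V$ and every $s\in \mathcal S$; that is, each sink in $\mathcal{S}$ is automatically $B$-summable, and so $\phi^s$ is defined. Writing $N_s = \sum_{w}\sum_{n\geq 0} B^n_{ws}$ and $t_s = \psi_s N_s$, this rewrites as $\psi = h + \sum_{s \in \mathcal{S}} t_s \phi^s$. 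Using $\sum_v \phi^s_v = 1$ and summing over $v$ gives $1 = \sum_v h_v + \sum_{s\in\mathcal S}t_s$, and since $h\geq 0$ and $t_s>0$ for $s\in\mathcal S$, each $t_s\in(0,1]$.

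For uniqueness I would exploit that every $B$-harmonic vector vanishes at every sink (evaluate the harmonic identity at $s$ and use $B_{sw}=0$), and that $\phi^{s'}_{s} = 0$ whenever $s\neq s'$ are sinks, while $\phi^{s}_s>0$. Evaluating any decomposition of the asserted form at a sink $s$ therefore recovers $\psi_s$ directly from $t_s$; this pins down $\mathcal{S}$ as $\{s:\psi_s>0\}$ and determines each $t_s$, after which $h = \psi - \sum_{s} t_s\phi^s$ is also determined. I do not foresee a real obstacle here; the only mildly subtle point, which deserves to be stated explicitly, is the observation in the second step that pointwise convergence of the Riesz series automatically enforces $B$-summability of precisely those sinks on which $k$ is positive, which is what makes $\phi^s$ well defined and enables the repackaging.
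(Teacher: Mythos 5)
Your proof is correct and follows exactly the route the paper intends (the paper derives this proposition by simply "combining Lemma \ref{a6} and Lemma \ref{a5}"): apply the Riesz decomposition to $\psi$, observe that $k$ is supported on the sinks with $\psi_s>0$, and repackage the potential part as $\sum_s t_s\phi^s$. Your explicit remark that the convergence of the Riesz series automatically forces $B$-summability of precisely those sinks, and your uniqueness argument via evaluation at the sinks (using $h_s=0$ for harmonic $h$ and $\phi^{s'}_s=0$ for $s\neq s'$), correctly fill in the details the paper leaves implicit.
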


We turn to a study of the $B$-harmonic vectors. For any pair of subsets $E,D \subseteq V$ we let
$B^{E,D}$ denote the $E \times D$-matrix obtained by restricting $B$
to $E \times D$, and we set $B^E = B^{E,E}$ for any subset $E
\subseteq V$.

Write $v \leadsto w$ between two vertexes $v,w$ when there is a finite path $\mu = e_1\cdots e_n$ in $G$ such
that $s(e_1) = v$ and $r(e_n) =w$, and $v \sim w$ when $v \leadsto
w$ and $w \leadsto v$. Then $\sim$ is an equivalence relation since we consider
a vertex $v$ as a finite path (of length $0$) from $v$ to $v$. A
\emph{component} $C$ in $G$ is an equivalence class in
$V/\sim$ such that $B^C \neq 0$. For any collection $F$ of vertexes in $G$ we define the
\emph{closure} of $F$ to be the set of
vertexes that 'talk' to an element of $F$, i.e. $v \in \overline{F}$ if
and only if there is a vertex $w \in F$ such that $v \leadsto w$. In
contrast the \emph{hereditary closure} of a set $F$ consists of the
vertexes $w \in V$ such that $v \leadsto w$ for some $v\in F$. The
hereditary closure will be denoted by $\widehat{F}$.

In the following we denote the spectral radius of a finite matrix $A$
by $\rho(A)$. A component $C$ in $G$ is \emph{$B$-harmonic} when
\begin{enumerate}
\item[a)] $\rho\left(B^C\right) = 1$ and
\item[b)] $\rho\left(B^{\overline{C} \backslash C}\right) < 1$ if
  $\overline{C} \backslash C \neq \emptyset$.
\end{enumerate}

This definition, as well as the proof of the following lemma, is inspired by Theorem 4.3 in \cite{aHLRS}.

\begin{lemma}\label{a7} Let $C$ be a $B$-harmonic component
  in $G$. There is a unique normalized $B$-harmonic vector $\phi^C$ such that
  $B^C \phi^C|_C = \phi^C|_C$ and
  $\phi^C_v \neq 0 \ \Leftrightarrow \ v \in \overline{C}$.
\end{lemma}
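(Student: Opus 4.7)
The plan is to decompose $V$ as $(V\setminus\overline{C}) \sqcup D \sqcup C$, where $D := \overline{C}\setminus C$, and build $\phi^C$ piecewise. Two combinatorial observations drive the argument. First, edges out of $C$ never enter $D$: if $v\in C$ and $v\to w$, then either $w\leadsto v$ (so $w\sim v$ and $w\in C$) or $w\not\leadsto v$, in which case $w$ cannot reach any vertex of $C$ and so $w\notin\overline{C}$. Second, edges out of $V\setminus\overline{C}$ remain in $V\setminus\overline{C}$, since a vertex that does not reach $C$ cannot have a successor that does. Thus $B$ has the block form
\[
B=\begin{pmatrix}B^{V\setminus\overline{C}} & 0 & 0 \\ * & B^D & B^{D,C} \\ 0 & 0 & B^C\end{pmatrix}
\]
with respect to this decomposition.

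On $C$ I appeal to Perron-Frobenius. Any path in $G$ between two vertices of $C$ stays in $C$, since an intermediate vertex $u$ on a path from $v$ to $w$ satisfies $v\leadsto u\leadsto w\leadsto v$, hence $u\sim v$ and $u\in C$. Thus $B^C$ is irreducible; combined with $\rho(B^C)=1$, this yields a strictly positive eigenvector $x\in(0,\infty)^C$, unique up to a positive scalar, with $B^C x=x$, and I set $\phi^C|_C := x$. On $D$ I exploit $\rho(B^D)<1$, which makes $I-B^D$ invertible, and define
\[
\phi^C|_D := (I-B^D)^{-1} B^{D,C} \phi^C|_C = \sum_{n=0}^{\infty} (B^D)^n B^{D,C} \phi^C|_C.
\]
Finally, set $\phi^C_v := 0$ for $v\notin\overline{C}$, and normalize.

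The required properties then follow readily. Harmonicity is verified block by block: on $C$ both sides equal $(B^C\phi^C|_C)_v$ thanks to the first observation; on $D$ it is precisely the defining equation for $\phi^C|_D$; off $\overline{C}$ both sides vanish thanks to the second observation. Strict positivity on $D$ follows from the Neumann series by picking, for each $v\in D$, a shortest path $v=v_0\to\cdots\to v_n$ with $v_n\in C$ and $v_0,\dots,v_{n-1}\in D$; this produces the strictly positive contribution $(B^D)^{n-1}_{v,v_{n-1}}\, B^{D,C}_{v_{n-1},v_n}\,\phi^C_{v_n}>0$. For uniqueness I run the argument in reverse: Perron-Frobenius forces $\psi|_C$ to be a positive multiple of $\phi^C|_C$, the support hypothesis forces $\psi$ to vanish off $\overline{C}$, and invertibility of $I-B^D$ then determines $\psi|_D$; normalization eliminates the remaining scalar. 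The main obstacle is really just the two combinatorial observations above; once those are in hand, the remainder is essentially a block-matrix computation.
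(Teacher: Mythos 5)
Your construction is essentially the paper's: $\phi^C|_C$ is the Perron--Frobenius vector of the irreducible matrix $B^C$, $\phi^C|_{\overline{C}\setminus C} = \bigl(1-B^{\overline{C}\setminus C}\bigr)^{-1}B^{\overline{C}\setminus C,\,C}\,\phi^C|_C$ via the same Neumann series, zero elsewhere, and your uniqueness argument (Perron--Frobenius pins down $\psi|_C$ up to a scalar, then invertibility of $1-B^{\overline{C}\setminus C}$ pins down the rest) is the paper's argument in a mildly different phrasing. One small correction: the $(C,\,V\setminus\overline{C})$ block of $B$ need not vanish --- a vertex of $C$ may well emit an edge leaving $\overline{C}$, for instance to a sink outside $\overline{C}$ --- so the bottom row of your displayed block matrix should read $(\ast,\,0,\,B^C)$ rather than $(0,\,0,\,B^C)$. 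This is harmless, since $\phi^C$ vanishes on $V\setminus\overline{C}$ by construction, but it means the harmonicity check at vertices of $C$ relies on that vanishing together with your first observation (no edges from $C$ into $\overline{C}\setminus C$), not on the first observation alone.
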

\begin{proof} Existence: Since $\rho\left(B^C\right) = 1$ it follows from
  Perron-Frobenius theory that there is a strictly positive vector $x^C \in [0,\infty)^C$ such that $B^C x^C = x^C$. Since
  $\rho\left(B^{\overline{C} \backslash C}\right) < 1$, the matrix
  $1^{\overline{C} \backslash C} - B^{\overline{C} \backslash C}$ is
  invertible and we set
$$
\phi^C = \left( 1^{\overline{C} \backslash C} - B^{\overline{C}
    \backslash C}\right)^{-1} B^{\overline{C} \backslash C, C}x^C \ +
\ x^C,
$$
which is a strictly positive vector in $[0,\infty)^{\overline{C}}$. For any pair of
vertexes $v,w\in \overline{C}\backslash C$,
$$
\limsup_n \left( B^n_{v,w} \right)^{\frac{1}{n}} \leq
\rho\left(B^{\overline{C} \backslash C}\right) < 1,
$$
and hence
$$
 \left( 1^{\overline{C} \backslash C} - B^{\overline{C}
    \backslash C}\right)^{-1} = \ \sum_{n=0}^{\infty} \left(B^{\overline{C}
    \backslash C}\right)^{n} .
$$  
Using this and that no vertex in
$C$ talks to a vertex in $\overline{C} \backslash C$, we find that
\begin{equation}\label{april1}
\begin{split}
&B^{\overline{C}}\phi^C = B^{\overline{C} \backslash C}\left( 1^{\overline{C} \backslash C} - B^{\overline{C}
    \backslash C}\right)^{-1} B^{\overline{C} \backslash C, C}x^C +
B^{\overline{C} \backslash C, C}x^C + B^Cx^C\\
& =  B^{\overline{C} \backslash C}\sum_{n=0}^{\infty} \left(B^{\overline{C}
    \backslash C}\right)^{n} B^{\overline{C} \backslash C, C}x^C + 
B^{\overline{C} \backslash C, C}x^C + x^C\\
& = \sum_{n=1}^{\infty} \left(B^{\overline{C}
    \backslash C}\right)^{n} B^{\overline{C} \backslash C, C}x^C + 
B^{\overline{C} \backslash C, C}x^C + x^C \\
& = \sum_{n=0}^{\infty} \left(B^{\overline{C}
    \backslash C}\right)^{n} B^{\overline{C} \backslash C, C}x^C +
x^C\\
& = \phi^C .
\end{split}
\end{equation}
Set $\phi^C_v = 0$ when $v \notin \overline{C}$ and normalize the
resulting vector in $[0,\infty)^V$. It follows from (\ref{april1})
that $\phi^C$ is $B$-harmonic. Since $\phi^C|_C$ is multiple of
$x^C$ by construction, it follows that $B^C\phi^C|_C = \phi^C|_C$.

Uniqueness: If $\psi$ is a normalized $B$-harmonic vector such that
$B^C\psi|_C = \psi|_C$ and $\psi_v \neq 0 \Leftrightarrow v
\in \overline{C}$, it follows from Perron-Frobenius theory that there is a
$\lambda> 0$ such that $\psi_v = \lambda \phi^C_v \ \forall v \in
C$. Then $\psi - \lambda \phi^C$ is vector supported in $\overline{C}
\backslash C$ such that $B^{\overline{C}\backslash C}(\psi - \lambda
\phi^C) = \psi - \lambda \phi^C$. Since $\rho\left(B^{\overline{C}
    \backslash C}\right) < 1$, it follows first that $\psi = \lambda
\phi^C$ and then that $\psi = \phi^C$ because both vectors are normalized.
\end{proof}

The following theorem is equivalent to the Frobenius-Victory theorem
stated as Theorem 2.7 in \cite{Ta}.

\begin{thm}\label{a8} Let $\psi \in [0,1]^V$ be a normalized
  $B$-harmonic vector. There is a unique collection $\mathcal
  C$ of
  $B$-harmonic components in $G$ and positive
  numbers $t_C \in ]0,1], C \in \mathcal C$,
 such that
\begin{equation}\label{a10}
\psi = \sum_{C \in \mathcal C} t_C\phi^C .
\end{equation}
\end{thm} 
\begin{proof} Set $\copyright = \left\{ v \in V: \ \psi_v >
    0\right\}$. Let $v \in \copyright$. Since $B^n_{vv}\psi_v \leq
  \psi_v$ for all $n$, it follows that
$$
\limsup_n \left(B^n_{vv}\right)^{\frac{1}{n}} \leq 1 .
$$
Hence 
$$
\rho\left( B^{\copyright}\right) = \sup_{v \in \copyright} \limsup_n
\left(B^n_{vv}\right)^{\frac{1}{n}} \leq 1 .
$$
On the other hand, the fact that $B^{\copyright}\psi|_{\copyright} =
\psi|_{\copyright}$ implies that $\rho\left( B^{\copyright}\right)
\geq 1$, and we conclude that
\begin{equation}\label{aa9}
\rho\left( B^{\copyright}\right) = 1.
\end{equation}
Since 
$$
\rho\left( B^{\copyright}\right) = \sup_C \rho\left( B^{C}\right),
$$
where we take the supremum over the components of $G$
contained in $\copyright$, the collection $\mathcal C'$ of
components $C$ from $G$ such that $C \subseteq
\copyright$ and $\rho(B^C) = 1$ is not empty. Order the elements of
$\mathcal C'$ such that $C \leq C'$ when the elements in $C$
talk to the elements of $C'$. Let $\mathcal C$ be the minimal
elements of $\mathcal C'$ with respect to this order. Let
$D\in \mathcal C$. We claim that $D$ is a $B$-harmonic
component, i.e. we assert that
$$
\rho\left( B^{\overline{D} \backslash D}\right) < 1 .
$$ 
Since $\overline{D} \subseteq \copyright$ it follows from (\ref{aa9})
that $\rho\left( B^{\overline{D} \backslash D}\right) \leq 1$. If
$\rho\left( B^{\overline{D} \backslash D}\right) =1$, there must be
one of $G$'s
 components, say $D'$, contained in $\overline{D}
\backslash D$ such that $\rho\left(B^{D'}\right) =1$. But then $D' \in
\mathcal C'$, $D' \neq D$ and $D' \leq D$, contradicting the
minimality of $D$. Hence $D$ is $B$-harmonic as claimed, and we
conclude that $\mathcal C$ consists of $B$-harmonic 
components.

Let $D \in \mathcal C$. Then $B^D\psi|_D \leq \psi|_D$ so it
follows from Perron-Frobenius theory that there is $t_D \geq 0$ such
that $\psi|_D = t_D\psi^D|_D$. Since $\psi|_D$ and $\psi^D|_D$ are
strictly positive, $t_D$ is positive too. Set
$$
\eta = \psi - \sum_{D \in \mathcal C} t_D \psi^D .
$$
We claim that $\eta = 0$. To show this, set $K = \bigcup_{D \in \mathcal C} D$, and note that $\eta|_K
= 0$. Let $H$ be the hereditary closure of $K$, i.e. $H = \widehat{K}$. Consider a $D \in \mathcal C$. When $v \in \left(H
\backslash K\right) \cap \overline{D}$, there is a path from (some
element of) $D' \subseteq K$ to $v$ and a path from $v$ to (some
element of) $D$. Note that $D' \neq D$ since otherwise $v$ would have
to be an element of $D \subseteq K$. But $D' \neq D$ is impossible
since $D$ is minimal for the order on $\mathcal C'$. Hence $\left(H
\backslash K\right) \cap \overline{D} = \emptyset$, showing that $\psi^D|_{H
\backslash K} = 0$.  It follows that $\eta|_{H \backslash K} = \psi_{H
\backslash K}$, and hence that $\eta|_H \geq 0$. Let $w\in H$. There
is an $l \in \mathbb N$ and $v \in K$ such that $B^l_{vw} \neq
0$. Since $B^l\eta = \eta$ we find that $0 = \eta_v = \sum_{u \in V}
B^l_{vu}\eta_u \geq B^l_{vw}\eta_w \geq 0$, implying that $\eta_w =
0$. Hence $\eta|_H = 0$. Now note that 
\begin{equation}\label{a9}
\rho\left(B^{\copyright \backslash H}\right) < 1
\end{equation}
since all components $D$ in $\copyright$ with
$\rho\left(B^D\right) = 1$ are contained in $H$. Since
$$
\left( B^{\copyright \backslash H}\eta\right)_v = \sum_{w \in
  \copyright \backslash H} B_{vw} \eta_w =  \sum_{w \in
  V} B_{vw} \eta_w = \eta_v
$$
for all $v \in \copyright \backslash H$, it follows from (\ref{a9})
that $\eta|_{\copyright \backslash H} = 0$. Thus $\eta = 0$ as claimed
and (\ref{a10}) follows.  

To prove the uniqueness part of the statement let $\mathcal D$ be a
collection of $B$-harmonic components in $G$ and $s_C,C \in
\mathcal D$, positive numbers such that
$$
\psi = \sum_{C \in \mathcal D} s_C \phi^C.
$$
Then $\copyright = \bigcup_{C \in \mathcal C} \overline{C} =
\bigcup_{C \in \mathcal D}\overline{C}$, so when $C \in
\mathcal D$ there is a $C' \in \mathcal C$ such that $C
\cap \overline{C'} \neq \emptyset$. It follows that $C \subseteq
\overline{C'}$ and that either $C' = C$ or $C \subseteq \overline{C'}
\backslash C'$. However, $\rho(B^C) =1$ while $\rho\left(B^{\overline{C'}
  \backslash C'}\right) < 1$, and it follows therefore that $C =
C'$. In this way we conclude that $\mathcal D = \mathcal
C$. Since the preceding argument shows that $C \cap
\overline{C'} = \emptyset$ when $C$ and $C'$ are distinct elements
from $\mathcal C$, we find that
$$
s_C \phi^C|_C = \psi|_C = t_C\phi^C|_C,
$$ 
and hence that $s_C = t_C$ for all $C \in \mathcal C$.
\end{proof}

\begin{cor}\label{a12} The normalized $B$-harmonic vectors
  constitute a finite dimensional simplex whose set of extreme points
  is 
$$
\left\{\phi^C: \ C \ \text{a $B$-harmonic component in $G$}
  \right\} .
$$
\end{cor}

Combining Theorem \ref{a8} with Proposition \ref{b2} we obtain the following

\begin{cor}\label{b1} The set of normalized $B$-almost harmonic
  vectors constitute a finite dimensional simplex whose set of extreme
  points is
$$
\left\{\phi^C: \ C \ \text{a $B$-harmonic component in $G$}
  \right\} \cup \left\{ \phi^s: \ s \ \text{a $B$-summable sink in} \
    G \right\} .
$$
\end{cor}

\section{Gauge invariant KMS states}

It follows from Lemma \ref{a2} and Corollary \ref{b1} that the gauge
invariant $\beta$-KMS states for $\alpha^F$ are determined by the
$A(\beta)$-harmonic components and the $A(\beta)$-summable sinks. In
this section we complete the description of the gauge invariant KMS
states for $\beta \neq 0$ by finding the $A(\beta)$-harmonic components and the
$A(\beta)$-summable sinks for each $\beta \in \mathbb R \backslash
\{0\}$. \footnote{We could have handled the case $\beta = 0$ here also, but it
does simplify things a little when $\beta \neq 0$, and we will have to
consider the $\beta = 0$ case separately for other reasons anyway.}

\subsection{$A(\beta)$-harmonic components}

 A \emph{loop} in $G$ is a finite path $\mu = e_1e_2 \cdots e_n$ (of
positive length, i.e. $n \geq 1$) such that $s(e_1) =
r(e_n)$. If a component $C$ only contains a single loop, we call it \emph{circular}.

%For the following proofs it is convenient to introduce a little more
%notation. When $X$ and $Y$ are sets of vertexes and $n \in
%                 \mathbb{N}$, we let
%$XE^{n}Y$ denote the set of paths $\alpha$ in $G$ of length $n$ such
%that in $s(\alpha) \in X$ and $r(\alpha) \in Y$, 
%and we use
%                 the abbriviation $XEY$ for $XE^{1}Y$ and $XE^{*}Y$
%                 for $\bigcup_{n=1}^{\infty} XE^{n}Y$, and we write
%                 e.g. $XE$ for $XEV$, $E^{n}Y$ for $VE^{n}Y$ and so
%                 on.

\begin{lemma}\label{b4} Let $C \subseteq V$ be a component. The function
$$
\mathbb R \ni \beta \mapsto \rho\left(A(\beta)^C\right)
$$
is log-convex and continuous. 
\end{lemma}
\begin{proof} Since $C$ is a component there is a loop in $C$, of
  length $p$, say. Let $v$ be a vertex on this loop. It follows that $ \log
\rho\left(A(\beta)^C\right) \geq \frac{1}{p} \log
\left(A(\beta)^C\right)^p_{vv}$, showing that the logarithm of the function we consider
takes finite values for all $\beta$. Its continuity
follows therefore from its log-convexity which is established as follows. Let $v \in C$
and $\beta, \beta' \in \mathbb R, t \in [0,1]$. For each $n \in
\mathbb N$ let $vE^nv$ denote the set of paths of length $n$ from $v$
back to itself. Then
$$
\left(A(t\beta  + (1-t)\beta')^C\right)^n_{vv} = \sum_{\mu \in vE^{n}v}
e^{-(t\beta  + (1-t)\beta')F(\mu)} = \sum_{\mu \in vE^{n}v}
\left(e^{-\beta F(\mu)}\right)^t \left(e^{-\beta' F(\mu)}\right)^{1-t} . 
$$
Then
H\"olders inequality shows that
$$
\left(A(t\beta  + (1-t)\beta')^C\right)^n_{vv} \leq
\left(\left(A(\beta)^C\right)^n_{vv}\right)^t
\left(\left(A(\beta')^C\right)^n_{vv}\right)^{1-t} .
$$ 
It follows that 
$$
\rho\left(A(t\beta  + (1-t)\beta')^C\right) = \limsup_n
\left(\left(A(t\beta  +
    (1-t)\beta')^C\right)^n_{vv}\right)^{\frac{1}{n}}
$$
is dominated by the product 
$$
\rho\left(A(\beta)^C\right)^t \rho\left(A(\beta')^C\right)^{1-t} ,
$$
which is what we needed to prove.
\end{proof}

\begin{lemma}\label{b18} Let $C$ be a component in $G$ which is not circular.
\begin{enumerate} 
\item[i)] If $F(\mu) > 0$ for all loops $\mu$ in $C$, there is a
  unique $\beta_0 \in \mathbb R$ such that
  $\rho(A(\beta_0)^C)=1$. This $\beta_0$ is positive and
  $\rho\left(A(\beta)^C\right) < 1$ if and only if $\beta > \beta_0$.
 \item[ii)] If $F(\mu) < 0$ for all loops $\mu$ in $C$, there is a
  unique $\beta_0 \in \mathbb R$ such that
  $\rho(A(\beta_0)^C)=1$. This $\beta_0$ is negative and
  $\rho\left(A(\beta)^C\right) < 1$ if and only if $\beta < \beta_0$. 
\item[iii)] In all other cases, i.e. if $F(\mu) = 0$ for some loop in
  $C$ or there are loops $\mu_1,\mu_2$ in $C$ such that $F(\mu_1) < 0
  < F(\mu_2)$, it follows that $\rho\left(A(\beta)^C\right) > 1$ for
  all $\beta \in \mathbb R$.
\end{enumerate}
\end{lemma}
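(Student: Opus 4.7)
The plan is to exploit the log-convexity and continuity of $g(\beta) := \log\rho(A(\beta)^C)$ from Lemma \ref{b4}, together with two preliminary facts. First, since $C$ is a strongly connected finite multigraph that is not a single cycle, its adjacency matrix $A(0)^C$ has spectral radius $\rho(A(0)^C) > 1$, so $g(0) > 0$. Second, for any loop $\mu$ in $C$ the inequality $(A(\beta)^C)^{n|\mu|}_{s(\mu)s(\mu)} \ge e^{-n\beta F(\mu)}$ yields the lower bound $\rho(A(\beta)^C) \ge e^{-\beta F(\mu)/|\mu|}$.

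For case (i) I will show that $\rho(A(\beta)^C) \to 0$ as $\beta \to +\infty$ and $\rho(A(\beta)^C) \to +\infty$ as $\beta \to -\infty$. The blow-up at $-\infty$ is immediate from the loop bound applied to any single loop in $C$, all of which have $F(\mu) > 0$. For the decay at $+\infty$ I use a cycle-mean estimate: since $C$ has only finitely many simple cycles and each has $F(\sigma) > 0$, the number $\delta := \min_{\sigma} F(\sigma)/|\sigma|$ is strictly positive; and because the edge multiset of every closed walk in $C$ decomposes as a disjoint union of simple cycles, $F(\mu) \ge \delta|\mu|$ for every closed walk $\mu$ in $C$. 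Hence for $\beta \ge 0$ we have $(A(\beta)^C)^n_{vv} \le e^{-\beta\delta n}(A(0)^C)^n_{vv}$ and therefore $\rho(A(\beta)^C) \le e^{-\beta\delta}\rho(A(0)^C) \to 0$. A convex function tending to $-\infty$ at $+\infty$ and $+\infty$ at $-\infty$ is necessarily non-increasing, and it cannot be constant on any interval (constancy would propagate by convexity to a half-line, contradicting $g \to -\infty$), so $g$ is strictly decreasing. Continuity then gives a unique $\beta_0$ with $\rho(A(\beta_0)^C) = 1$, and $\beta_0 > 0$ because $g(0) > 0$. Case (ii) is entirely symmetric under $\beta \mapsto -\beta$.

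For case (iii) I aim to prove $\rho(A(\beta)^C) > 1$ for every $\beta \in \mathbb{R}$. When there are loops $\mu_1,\mu_2$ in $C$ with $F(\mu_1) < 0 < F(\mu_2)$, the loop bound applied to the $\mu_i$ with $\beta F(\mu_i) < 0$ already gives $\rho(A(\beta)^C) > 1$ for every $\beta \neq 0$, while $\beta = 0$ is covered by $g(0) > 0$. The delicate remaining subcase is when only the first alternative in (iii) holds: some loop $\mu^*$ satisfies $F(\mu^*) = 0$, but (after possibly replacing $F$ by $-F$) every loop in $C$ has $F \ge 0$; here the loop bound only gives $\rho(A(\beta)^C) \ge 1$. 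To strengthen this, I will exhibit two loops at $v^* := s(\mu^*)$ that freely generate a rank-$2$ submonoid inside the free monoid $M$ of closed walks at $v^*$. Let $\xi$ be the primitive root of $\mu^*$ in $M$; then $F(\xi) = 0$. Non-circularity of $C$ produces some loop $\nu \in M$ that is not a power of $\xi$: otherwise every vertex visited by $\xi$ would have out-degree one in $C$, because any deviating edge at such a vertex could be completed, via strong connectivity, into a loop at $v^*$ that is not a power of $\xi$; but then $C$ would collapse to the single cycle $\xi$. Uniqueness of the primitive root in a free monoid now shows that $\mu^*$ and $\nu$ share no common root, and hence they freely generate a free rank-$2$ submonoid.

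Summing the generating-series contribution of words in $\{\mu^*,\nu\}^*$ to the walks counted by $(A(\beta)^C)^n_{v^*v^*}$ gives
\[
\sum_n (A(\beta)^C)^n_{v^*v^*}\, x^n \;\ge\; \frac{1}{1 - x^{|\mu^*|} - x^{|\nu|}e^{-\beta F(\nu)}},
\]
so $\rho(A(\beta)^C) \ge 1/x_0(\beta)$, where $x_0(\beta)$ is the smallest positive solution of $x^{|\mu^*|} + x^{|\nu|}e^{-\beta F(\nu)} = 1$. At $x=1$ the left-hand side equals $1 + e^{-\beta F(\nu)} > 1$, so $x_0(\beta) < 1$ and therefore $\rho(A(\beta)^C) > 1$ for every $\beta$. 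The main obstacle is precisely this last subcase of (iii): securing the rank-$2$ free submonoid via the non-circularity of $C$ is what upgrades the generating-series lower bound from $\rho \ge 1$ to the strict inequality $\rho > 1$.
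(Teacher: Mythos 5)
Your proof is correct, but parts of it take a different route from the paper's, so a comparison is in order. For i) and ii) the two arguments share all the essential ingredients: the log-convexity and continuity of $\beta\mapsto\rho(A(\beta)^C)$ from Lemma \ref{b4}, the fact that $\rho(A(0)^C)>1$ for a non-circular component, and an upper bound of the form $\rho(A(\beta)^C)\le e^{-\beta\delta}\rho(A(0)^C)$ obtained by decomposing closed walks into short cycles (your $\delta=\min_\sigma F(\sigma)/|\sigma|$ plays the role of the paper's $a/\#C$). The only real difference there is that the paper proves strict monotonicity directly from a quantitative two-point estimate, whereas you deduce it softly from convexity together with the asymptotics at $+\infty$; that is slightly less computation at the price of leaning harder on Lemma \ref{b4}. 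The genuine divergence is in the subcase of iii) where some loop $\mu^*$ has $F(\mu^*)=0$. The paper produces a second closed walk $\nu\neq(\mu^*)^m$ of length $m|\mu^*|$ at $v^*=s(\mu^*)$ and uses the elementary bound $(A(\beta)^C)^{m|\mu^*|}_{v^*v^*}\ge 1+e^{-\beta F(\nu)}$ together with super-multiplicativity of diagonal entries; you instead exhibit a loop $\nu$ generating, together with $\mu^*$, a free submonoid of rank $2$, and bound the diagonal generating series below by $\bigl(1-x^{|\mu^*|}-x^{|\nu|}e^{-\beta F(\nu)}\bigr)^{-1}$. Both arguments rest on the same combinatorial input — non-circularity forces a second independent closed walk at $v^*$ — and your free-monoid machinery is heavier than needed (note that your $\nu$ immediately yields the paper's: since $\mu^*$ and $\nu$ do not commute, $(\mu^*)^{|\nu|}$ and $\nu^{|\mu^*|}$ are distinct closed walks of the same length, which is all the counting bound requires). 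What your version buys is a careful justification of the existence and independence of the second walk, a point the paper merely asserts.
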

\begin{proof} Some of the following arguments have appeared in
  \cite{Th3}. i): We claim that $\beta \mapsto
  \rho(A(\beta)^C)$ is strictly decreasing. To see this, set $$
a = \min \left\{ F(\mu) : \ \mu \ \text{is a loop in $C$ of length} \ 
  |\mu| \leq \# C \right\} .
$$ 
Consider $\beta' <\beta$ and a loop $\mu$ in $C$ of length $n$. Then $\mu =
\mu_1\mu_2 \cdots \mu_m$, where each $\mu_i$ is a loop in $C$ of
length $\leq \# C$, and
$$
e^{-\beta' F(\mu)}e^{\beta F(\mu)} = \prod_j e^{(\beta-\beta')
  F(\mu_j)} \geq    e^{m(\beta-\beta')
  a} \geq   e^{\frac{n}{\# C}(\beta-\beta') a} .
$$
Summing over all loops of length $n$ starting and ending at the same
vertex $v$ in $C$, it follows first that
$$
\left(A(\beta')^C\right)^n_{vv} \ \geq \ e^{\frac{n}{\# C}(\beta-\beta')
  a} \left(A(\beta)^C\right)^n_{vv} ,
$$ 
and then that
\begin{equation*}
\begin{split}
&\rho\left( A(\beta')^C\right) =\limsup_n
\left(\left(A(\beta')^C\right)^n_{vv}\right)^{\frac{1}{n}}  \geq \rho\left( A(\beta)^C\right) e^{\frac{1}{\#
      C}(\beta-\beta') a} > \rho\left( A(\beta)^C\right) .
\end{split}
\end{equation*} 
This proves the claim. Note that $A(0)^C$ is the adjacency matrix of
the subgraph $H$ of $G$ whose vertex set is $C$. This is a finite strongly
connected graph and it is
well-known, and easy to show, that $\rho(A(0)^C) > 1$ because $H$ by assumption consists of more
than a single loop. In view of Lemma \ref{b4} it suffices now to show
that $\lim_{\beta \to \infty} \rho(A(\beta)^C) = 0$. To
this end note that any path in $H$ of
  length $\geq \# C$ must visit at least one vertex twice. It follows that for any path $\mu \in P_f(H)$ of length $n$ with $r(\mu) = s(\mu)$ there is a finite collection
$$
\left\{\nu_1,\nu_2,\cdots ,\nu_N\right\} \subseteq \left\{ \mu
  \in P_f(H):  \ 1 \leq |\mu| \leq
  \# C, \ s(\mu) = r(\mu) \right\}
$$
such that $N \geq \frac{n}{\# C}$ and
$$
F(\mu) =\sum_{j=1}^N   F(\nu_j) \geq N a \geq  \frac{n a}{\# C}.
$$  
Let $\beta > 0$ and $v \in C$. Then
$$
A(\beta)^n_{vv} = \sum_{\mu \in vE^{n}v}e^{-\beta F(\mu)}  \leq A(0)^n_{vv} e^{-\frac{\beta a n}{\# C}} ,
$$ 
Hence
$$
\rho\left(A(\beta)^C\right) = \limsup_n
\left(\left(A(\beta)^C\right)^n_{vv}\right)^{\frac{1}{n}} \leq \rho\left(A(0)^C\right)e^{-
    \frac{\beta a}{\# C}} .
$$
Since $ a > 0$, it follows that  $\lim_{\beta \to \infty} \rho(A(\beta)^C) = 0$.
 
The proof of ii) is analogous to that of i).

iii):  Assume first that $F(\mu) = 0$ for some loop in $C$. Since we
assume that $C$ is not circular, there is a path $\nu
$ such that $|\nu| = m|\mu|$ for some $m \in \mathbb N$, $s(\nu) =
r(\nu) = s(\mu)$ and $\nu$ is not the composition of $m$ copies of
$\mu$. It follows that, with $v = s(\mu)$,
$$
\left(A(\beta)^C\right)^{nm|\mu|}_{vv}  \geq
\left(\left(A(\beta)^C\right)^{m|\mu|}_{vv}\right)^n  \geq (e^{-\beta m F(\mu)} + e^{-\beta F(\nu)})^n
=( 1+ e^{-\beta  F(\nu)})^n
$$
for all $n \in \mathbb N$, showing that
$$
\rho\left( A(\beta)^C\right)  \geq  \left(1 + e^{-\beta
    F(\nu)}\right)^{\frac{1}{m|\mu|}} > 1
$$
for all $\beta \in \mathbb R$. Assume then that there are loops $\mu_1,\mu_2$ in $C$ such that $F(\mu_1) < 0
  < F(\mu_2)$. We may assume that $\mu_1$ and $\mu_2$ start at the
  same vertex $v$, if necessary
  after a modification of $\mu_1$ or $\mu_2$. Then
$$
\left(A(\beta)^C\right)^{n|\mu_1||\mu_2|}_{vv} \geq \max \left\{
  e^{-\beta n |\mu_2| F(\mu_1)}, e^{-\beta n |\mu_1| F(\mu_2)} \right\}
$$
for all $n \in \mathbb N$, proving that
$$
\rho\left( A(\beta)^C\right)  \geq  \max \left\{
  e^{-\beta \frac{F(\mu_1)}{|\mu_1|}}, e^{-\beta
    \frac{F(\mu_2)}{|\mu_2|}} \right\} > 1
$$ 
for all $\beta \neq 0$. This completes the proof because $\rho\left( A(0)^C\right) >
1$ since $C$ is not circular.

\end{proof}

\begin{lemma}\label{b19} Let $C$ be a circular component consisting of
  the vertexes in the loop $\mu$. Then
$$
\rho\left(A(\beta)^C\right) = e^{-\beta \frac{F(\mu)}{|\mu|}} 
$$
for all $\beta \in \mathbb R$.
\end{lemma}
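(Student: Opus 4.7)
The plan is to exploit the very restrictive shape of a circular component: since $C$ contains only one loop $\mu$, this loop must be simple. Indeed, if $\mu$ revisited a vertex at an intermediate position, the sub-path between the two visits would be a second, distinct loop, contradicting circularity. So writing $\mu = e_1 e_2 \cdots e_p$ with $p = |\mu|$, the vertices $v_i = s(e_i)$ are $p$ distinct vertices that exhaust $C$, and the only edges inside $C$ are $e_1, \ldots, e_p$ (any further edge would produce another loop).

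With this identification, the matrix $A(\beta)^C$, viewed as a $p\times p$ matrix indexed by $v_1, \ldots, v_p$, is a weighted cyclic shift: its only nonzero entries are
\eQ{
A(\beta)^C_{v_i, v_{i+1}} = e^{-\beta F(e_i)}, \qquad i = 1, \ldots, p,
}
with indices taken mod $p$. First I would record this explicitly and then compute the $p$-th power. Since the only path in $C$ of length $p$ starting at $v_i$ is the one going once around the cycle and returning to $v_i$, the product telescopes to
\eQ{
\bigl(A(\beta)^C\bigr)^{p}_{v_i, v_i} \;=\; \prod_{j=1}^{p} e^{-\beta F(e_j)} \;=\; e^{-\beta F(\mu)},
}
and all off-diagonal entries of $(A(\beta)^C)^p$ vanish. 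Hence $(A(\beta)^C)^p = e^{-\beta F(\mu)} I$.

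Finally, applying the spectral radius formula $\rho(B^p) = \rho(B)^p$ to $B = A(\beta)^C$ yields
\eQ{
\rho\bigl(A(\beta)^C\bigr)^{p} \;=\; \rho\bigl((A(\beta)^C)^p\bigr) \;=\; e^{-\beta F(\mu)},
}
which, upon taking the positive $p$-th root, gives the desired identity $\rho(A(\beta)^C) = e^{-\beta F(\mu)/|\mu|}$. There is no real obstacle here; the only point needing a careful word is the simplicity of $\mu$, which is where the hypothesis that $C$ is \emph{circular} (rather than merely containing a loop) is used.
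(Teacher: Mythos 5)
Your proof is correct and complete; the paper itself leaves this lemma to the reader, and your argument (reduce to a weighted cyclic permutation matrix, observe $(A(\beta)^C)^{|\mu|}=e^{-\beta F(\mu)}I$, and apply $\rho(B)^{p}=\rho(B^{p})$) is exactly the standard computation the authors had in mind. You are also right to flag that circularity is what forces $\mu$ to be simple and to exclude any extra edges in $C$, which is the only point requiring a word of justification.
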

\begin{proof} Left to the reader.
\end{proof}

Let $C$ be a component. It follows from Lemma \ref{b18} and Lemma \ref{b19} that when $F(\mu) > 0$ for every loop
$\mu$ in $C$, or $F(\mu) < 0$ for every loop in $C$, there is a unique
number $\beta_C \in \mathbb R$ such that
$$
\rho\left(A(\beta_C)^C\right) = 1.
$$

\begin{defn}\label{b98} A non-circular component $C$ in $G$ is a \emph{KMS component of
  positive type} when 
\begin{enumerate}
\item[i)] $F(\mu) > 0$ for every loop $\mu$ in $\overline{C}$, and 
\item[ii)] $\beta_{C'} < \beta_C$ for every component $C'$ in
  $\overline{C}\backslash C$, if any.
\end{enumerate}
Similarly,  a non-circular component $C$ in $G$ is a \emph{KMS component of
  negative type} when 
\begin{enumerate}
\item[i)] $F(\mu) < 0$ for every loop $\mu$ in $\overline{C}$, and 
\item[ii)] $\beta_C < \beta_{C'}$ for every component $C'$ in
  $\overline{C}\backslash C$, if any. 
\end{enumerate}
\end{defn}

\begin{lemma}\label{april2} 
\begin{enumerate}
\item[i)] Let $\beta > 0$. A non-circular component
  $C$ is $A(\beta)$-harmonic if and only if $C$ is a KMS component of
  positive type and $\beta_C = \beta$.
\item[ii)]  Let $\beta <0$. A non-circular component
  $C$ is $A(\beta)$-harmonic if and only if $C$ is a KMS component of
  negative type and $\beta_C = \beta$.
 \end{enumerate}
\end{lemma}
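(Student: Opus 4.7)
The plan is to prove (i) only, since (ii) is symmetric under $\beta \leftrightarrow -\beta$. The argument will match, in both directions, the two defining conditions of an $A(\beta)$-harmonic component with the two defining conditions of a KMS component of positive type together with the equality $\beta_C = \beta$. The tools are Lemma \ref{b18} and Lemma \ref{b19} applied component-by-component inside $\overline{C}$, combined with the standard block-decomposition fact that, for any subset $U\subseteq V$, $\rho(A(\beta)^U)$ equals the maximum of $\rho(A(\beta)^{C'})$ over components $C'$ of $G$ contained in $U$ (and $0$ if there are none).

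For the forward direction, I would assume $\beta > 0$ and $C$ non-circular and $A(\beta)$-harmonic. Applying Lemma \ref{b18} to $C$ itself, case (iii) would force $\rho(A(\beta)^C)>1$, and case (ii) would force $\beta_C < 0$, so both contradict the data; only case (i) can occur. This yields $F(\mu)>0$ for every loop in $C$ and $\beta_C = \beta$. Next, for each component $C'\subseteq \overline{C}\setminus C$, the block-decomposition fact applied to $U = \overline{C}\setminus C$ gives $\rho(A(\beta)^{C'})<1$. The same case analysis of Lemma \ref{b18} (or Lemma \ref{b19} when $C'$ is circular, so that $\rho(A(\beta)^{C'}) = e^{-\beta F(\mu)/|\mu|}$) again rules out (ii) and (iii), leaving $F(\mu)>0$ on every loop in $C'$ and $\beta_{C'} < \beta = \beta_C$. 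Finally, since every loop in $\overline{C}$ is entirely contained in a single component (its vertices form a single $\sim$-equivalence class, which must be either $C$ or some $C'\subseteq \overline{C}\setminus C$), condition (i) of Definition \ref{b98} follows, while condition (ii) and the equality $\beta_C = \beta$ have been obtained directly.

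For the backward direction, I would assume $C$ is a KMS component of positive type with $\beta_C = \beta$. Lemma \ref{b18}(i) applied to $C$ gives $\rho(A(\beta)^C)=1$ by uniqueness of $\beta_C$. For each component $C'\subseteq \overline{C}\setminus C$, every loop in $C'$ is a loop in $\overline{C}$, so by hypothesis $F>0$ on it; Lemma \ref{b18}(i) (or Lemma \ref{b19} in the circular case) combined with $\beta > \beta_{C'}$ yields $\rho(A(\beta)^{C'})<1$. The block-decomposition fact then gives $\rho(A(\beta)^{\overline{C}\setminus C})<1$, so $C$ is $A(\beta)$-harmonic.

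The main subtle point, bookkeeping rather than analytic, is the interplay between loops, components, and the set $\overline{C}$: one must carefully confirm that every loop in $\overline{C}$ is confined to a single $\sim$-equivalence class, which is what allows the loop-sign hypothesis in Definition \ref{b98} to translate into the component-wise sign hypotheses of Lemmas \ref{b18}--\ref{b19}, and conversely. The rest of the argument is mechanical case-chasing.
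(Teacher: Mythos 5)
Your proof is correct and follows essentially the same route as the paper: both directions reduce, via Lemma \ref{b18}, Lemma \ref{b19} and the fact that $\rho\bigl(A(\beta)^{\overline{C}\setminus C}\bigr)$ is the maximum of $\rho\bigl(A(\beta)^{C'}\bigr)$ over the components $C'\subseteq\overline{C}\setminus C$ (or $0$ if there are none), to matching the two conditions in the definition of an $A(\beta)$-harmonic component with the two conditions of Definition \ref{b98} together with $\beta_C=\beta$. The only caution is that your block-decomposition fact is stated for arbitrary $U\subseteq V$, where it can fail if $U$ cuts through a component; it is valid here precisely because $\overline{C}\setminus C$ is a union of full $\sim$-classes and a path between two of its vertices cannot pass through $C$, which is the same bookkeeping you already carry out for the loops.
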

\begin{proof} The proofs of the two cases are identical and we
  consider here only case i): By definition, $C$ is $A(\beta)$-harmonic if and only if $\rho\left(A(\beta)^C\right) = 1$ and
  $\rho\left(A(\beta)^{\overline{C} \backslash C}\right) < 1$. In view
  of Lemma \ref{b18} the first condition is equivalent to $F(\mu)$
  being strictly positive for every loop $\mu$ in $C$ and
  that $\beta_C = \beta$. Note that 
$\rho\left( A(\beta_C)^{\overline{C} \backslash C}\right) = 0$ when $\overline{C} \backslash C$ is
non-empty, but does not contain any components, while
$$
\rho\left( A(\beta_C)^{\overline{C} \backslash C}\right)  = \max
\left\{ \rho\left(A(\beta_C)^{C'}\right) : \ C' \ \text{a component in
    $\overline{C} \backslash C$} \right\}
$$
otherwise. In view of i) in Lemma \ref{b18} and Lemma \ref{b19} this shows that the second
condition, 
$$
\rho\left(A(\beta_C)^{\overline{C}
      \backslash C}\right) < 1,
$$
holds if and only if $F(\mu) > 0$ for every loop $\mu$ in
$\overline{C} \backslash C$ and $\beta_{C'} <
  \beta_C$ for every component in
    $\overline{C} \backslash C$.
\end{proof}

We consider then the circular components.

\begin{defn}\label{b97} A circular component $C$ in $G$ is a \emph{KMS component of
  positive type} when 
\begin{enumerate}
\item[i)] $F(\nu) = 0$ for the loop $\nu$ in ${C}$,
\item[ii)] $F(\mu)>0$ for all loops $\mu$ in $\overline{C} \setminus C$, if any.
\end{enumerate}
Similarly,  a circular component $C$ in $G$ is a \emph{KMS component of
  negative type} when 
\begin{enumerate}
\item[i)] $F(\nu) = 0$ for the loop $\nu$ in ${C}$, and 
\item[ii)] $F(\mu)<0$ for all loops $\mu$ in $\overline{C} \setminus C$, if any.
\end{enumerate}
\end{defn}

Unlike non-circular components, a circular component $C$ can be a KMS
component of both positive and negative type. This occurs when there
are no loops in $\overline{C} \backslash C$.

Let $C$ be a circular component. Assume that $C$ is a KMS component of positive
type. If there are no components in $\overline{C} \backslash C$, it
follows $\rho\left( A(\beta)^{\overline{C}  \backslash C}\right) = 0$
for all $\beta \in \mathbb R$ and we set $I_C = \mathbb R$ in this case. Otherwise,
set $I_C = \left]\beta_C, \infty\right[$, where 
$$
\beta_C = \max \left\{ \beta_{C'}: \ C' \ \text{a component in} \
  \overline{C} \backslash C \right\}.
$$
Assume then that $C$ is a KMS component of negative type. If there are  no components in $\overline{C} \backslash C$,
we set $I_C = \mathbb R$.  Otherwise,
set $I_C = \left]-\infty, \beta_C\right[$, where 
$$
\beta_C = \min \left\{ \beta_{C'}: \ C' \ \text{a component in} \
  \overline{C} \backslash C \right\}.
$$

In analogy with Lemma \ref{april2} we have the following.

\begin{lemma}\label{april4} 
\begin{enumerate}
\item[i)] Let $\beta > 0$. A circular component
  $C$ is $A(\beta)$-harmonic if and only if $C$ is a KMS component of
  positive type and $\beta \in I_C$.
\item[ii)]  Let $\beta <0$. A circular component
  $C$ is $A(\beta)$-harmonic if and only if $C$ is a KMS component of
  negative type and $\beta \in I_C $.
 \end{enumerate}
\end{lemma}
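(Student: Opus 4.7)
The plan is to mirror the proof of Lemma \ref{april2}, handling only case i) since case ii) is entirely analogous. Recall that $C$ is $A(\beta)$-harmonic precisely when $\rho(A(\beta)^C) = 1$, together with $\rho(A(\beta)^{\overline{C}\backslash C}) < 1$ in the case $\overline{C}\backslash C \neq \emptyset$. I will show that for $\beta > 0$ these two conditions translate respectively into (i) and (ii) of Definition \ref{b97} together with the requirement $\beta \in I_C$.

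For the first condition, Lemma \ref{b19} applied to the circular component $C$ with loop $\nu$ gives $\rho(A(\beta)^C) = e^{-\beta F(\nu)/|\nu|}$. Since $\beta > 0$, this equals $1$ if and only if $F(\nu) = 0$, which is exactly condition (i) of Definition \ref{b97}.

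For the second condition, I would use that $\rho(A(\beta)^{\overline{C}\backslash C})$ is either $0$ (when $\overline{C}\backslash C$ contains no components of $G$) or else equals $\max\{\rho(A(\beta)^{C'}) : C' \text{ a component in } \overline{C}\backslash C\}$, exactly as in the proof of Lemma \ref{april2}. In the first subcase, (ii) is vacuously true and $I_C = \mathbb R$, so there is nothing to check. In the second subcase, Lemma \ref{b18} iii) (for non-circular $C'$) and Lemma \ref{b19} (for circular $C'$, where any sign choice for $F(\mu')$ other than $F(\mu') > 0$ forces $\rho(A(\beta)^{C'}) \geq 1$ when $\beta > 0$) show that $\rho(A(\beta)^{C'}) < 1$ can hold for some $\beta > 0$ only if $F(\mu) > 0$ for every loop $\mu$ in $C'$; this forces condition (ii) of Definition \ref{b97}. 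Assuming (ii), Lemma \ref{b18} i) (for non-circular $C'$) and Lemma \ref{b19} (for circular $C'$, where $\beta_{C'} = 0$) show that $\rho(A(\beta)^{C'}) < 1$ is equivalent to $\beta > \beta_{C'}$, and taking the maximum over all $C' \subseteq \overline{C}\backslash C$ gives $\beta > \beta_C$, i.e.\ $\beta \in I_C$, exactly as required.

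The only real difficulty is bookkeeping: components in $\overline{C}\backslash C$ may be circular or non-circular (so one must combine Lemmas \ref{b18} and \ref{b19} uniformly), and the case $\overline{C}\backslash C = \emptyset$ must be separated to match the convention $I_C = \mathbb R$. All the genuine analytic content is already encoded in Lemmas \ref{b18} and \ref{b19}, so the argument reduces to a straightforward case analysis.
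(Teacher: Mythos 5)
Your proposal is correct and is exactly the argument the paper intends: the authors' proof of Lemma~\ref{april4} is literally ``basically the same as for Lemma~\ref{april2},'' and your write-up carries out that adaptation faithfully, using Lemma~\ref{b19} to translate $\rho(A(\beta)^C)=1$ into $F(\nu)=0$ and Lemmas~\ref{b18} and~\ref{b19} to translate $\rho(A(\beta)^{\overline{C}\backslash C})<1$ into condition ii) of Definition~\ref{b97} together with $\beta\in I_C$. The case bookkeeping (empty $\overline{C}\backslash C$, no components, circular versus non-circular $C'$ with $\beta_{C'}=0$ in the circular case) is all handled correctly.
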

\begin{proof} Basically the same as for Lemma \ref{april2}.
\end{proof}

\subsection{$A(\beta)$-summable sinks}

\begin{defn}\label{b96} A sink $s$ in $G$ is a \emph{KMS sink of
  positive type} when $F(\mu) >0$ for every loop $\mu$ in $\overline{\{s\}}$, if any, and a \emph{KMS sink of
  negative type} when $F(\mu)< 0$ for every loop $\mu$ in $\overline{\{s\}}$, if any. 
\end{defn}

When there are no loops in $\overline{\{s\}}$ we set $I_s = \mathbb R$. When $s$ is a KMS sink
  of positive type with components in $\overline{\{s\}}$, we set $I_s = \left]\beta_s, \infty \right[$ where
$$
\beta_s = \max \left\{ \beta_{C'} : \ C' \ \text{a component in
    $\overline{\{s\}}$} \right\} .
$$
Similarly, when $s$ is a KMS sink
  of negative type with components in $\overline{\{s\}}$, we set $I_s = \left]-\infty ,\beta_s\right[$ where
$$
\beta_s = \min \left\{ \beta_{C'} : \ C' \ \text{a component in
    $\overline{\{s\}}$} \right\} .
$$

\begin{lemma}\label{april5} 
\begin{enumerate}
\item[i)] Let $\beta > 0$. A sink $s$ in $G$ is $A(\beta)$-summable if
  and only if $s$ is a KMS sink of
  positive type and $\beta \in I_s$.
\item[ii)]  Let $\beta <0$. A sink $s$ in $G$ is $A(\beta)$-summable
  if and only if $s$ is a KMS sink of
  negative type and $\beta \in I_s$.
 \end{enumerate}
\end{lemma}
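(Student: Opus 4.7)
The plan is to follow the structure of the proof of Lemma~\ref{april2}: reduce $A(\beta)$-summability of $s$ to a spectral radius condition and then decode that condition with Lemmas~\ref{b18} and \ref{b19}. Throughout, write $W = \overline{\{s\}}\setminus\{s\}$.

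First I would show that $s$ is $A(\beta)$-summable if and only if $\rho(A(\beta)^W) < 1$. Since $s$ emits no edges, any path from $v$ to $s$ of length $n \geq 1$ has all of its intermediate vertices in $W$: each intermediate vertex is clearly not $s$, and by considering the remaining portion of the path it can reach $s$, so it lies in $\overline{\{s\}}$. For $v \in W$ this yields the identity
\[
\sum_{n=1}^{\infty} A(\beta)^n_{vs} \;=\; \sum_{w \in W} \Bigl(\sum_{k=0}^{\infty}(A(\beta)^W)^k\Bigr)_{vw} A(\beta)_{ws},
\]
while for $v = s$ the sum is $1$ and for $v \notin \overline{\{s\}}$ it vanishes. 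If $\rho(A(\beta)^W) < 1$, the Neumann series $\sum_k (A(\beta)^W)^k = (1^W - A(\beta)^W)^{-1}$ converges entrywise, so $s$ is summable. Conversely, if $\rho(A(\beta)^W) \geq 1$, then some component $C'$ in $W$ satisfies $\rho(A(\beta)^{C'}) \geq 1$; fixing $v \in C'$ and a path of length $l$ and weight $c > 0$ from $v$ to $s$, concatenating an $n$-step loop at $v$ in $C'$ with this path yields $A(\beta)^{n+l}_{vs} \geq c\,(A(\beta)^{C'})^n_{vv}$. Since $A(\beta)^{C'}$ is a finite irreducible non-negative matrix, Perron--Frobenius forces $\sum_n (A(\beta)^{C'})^n_{vv} = \infty$ as soon as $\rho(A(\beta)^{C'}) \geq 1$, so $s$ fails to be $A(\beta)$-summable.

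The second step would decode $\rho(A(\beta)^W) < 1$. Using that the spectral radius of a non-negative matrix equals the maximum of the spectral radii of its strongly connected blocks, $\rho(A(\beta)^W) = \max_{C'} \rho(A(\beta)^{C'})$ as $C'$ ranges over the components in $\overline{\{s\}}$ (with this maximum being $0$ when there are no such components, in which case $A(\beta)^W$ is nilpotent and summability is automatic, matching $I_s = \mathbb R$). For $\beta > 0$, Lemma~\ref{b18} says that $\rho(A(\beta)^{C'}) < 1$ for a non-circular $C'$ is equivalent to $F(\mu) > 0$ for every loop $\mu$ in $C'$ together with $\beta > \beta_{C'}$; Lemma~\ref{b19} says that $\rho(A(\beta)^{C'}) = e^{-\beta F(\nu)/|\nu|} < 1$ for a circular $C'$ with loop $\nu$ amounts to $F(\nu) > 0$, which is automatic once $\beta > 0$. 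Taking the conjunction over all components in $\overline{\{s\}}$ gives exactly ``$s$ is a KMS sink of positive type and $\beta \in I_s$'', proving i). Part ii) follows by the symmetric argument with $\beta < 0$.

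The main point to get right is the converse half of the reduction: one needs divergence of $\sum_n A(\beta)^n_{vs}$ itself, not just of some diagonal entry of $\sum_k (A(\beta)^W)^k$. The detour-through-$C'$-loops construction above accomplishes precisely this, provided one has the Perron--Frobenius fact that a finite irreducible non-negative matrix of spectral radius $\geq 1$ has divergent diagonal series $\sum_n B^n_{vv}$ (a standard consequence of the asymptotic $B^{np}_{vv} \to c_v > 0$ inside each cyclic class, where $p$ is the period).
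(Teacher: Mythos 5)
Your proof is correct. The paper leaves this lemma to the reader, and your argument is exactly the intended one: it parallels the proof of Lemma~\ref{april2}, with the one genuinely new ingredient being the equivalence between $A(\beta)$-summability of $s$ and $\rho\bigl(A(\beta)^{\overline{\{s\}}\setminus\{s\}}\bigr)<1$, which you establish correctly in both directions (Neumann series for sufficiency, and the detour-through-a-component argument together with the recurrence of finite irreducible stochastic matrices for necessity, which is the standard fact also underlying Lemma~\ref{a6}).
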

\begin{proof} Left to the reader.
\end{proof}

\subsection{The gauge invariant  $\beta$-KMS states, $\beta \neq 0$.}

For $\beta \in \mathbb R \backslash \{0\}$, let $\mathcal C(\beta)$ be
the set of non-circular KMS
components $C$ such that $\beta_C =
\beta$, and $\mathcal Z(\beta)$ the set of circular KMS components $D$ such that $\beta \in I_D$. Let $\mathcal S(\beta)$ be the set of KMS
sinks $s$ with $\beta \in I_s$. We can then summarise our findings
with regard to the gauge invariant KMS states as follows.

\begin{thm}\label{gauge} Let $\beta \in \mathbb R \backslash \{0\}$. For every gauge
  invariant $\beta$-KMS state $\varphi$ for $\alpha^F$ there are unique
  functions $f : \mathcal C(\beta) \to [0,1], \ g: \mathcal Z(\beta)
  \to [0,1]$ and $h : \mathcal S(\beta) \to [0,1]$ such that $\sum_C
  f(C) + \sum_D g(D) + \sum_s h(s) = 1$ and
$$
\varphi(S_{\mu}S_{\nu}^*) = \delta_{\mu,\nu} e^{-\beta F(\mu)}
\phi_{r(\mu)}
$$
for all finite paths $\mu, \nu$, where $\phi \in [0,\infty)^V$ is the vector
$$
\phi_v = \sum_{C \in \mathcal C(\beta)} f(C)\phi^C_v + \sum_{D\in
  \mathcal Z(\beta)} g(D)\phi^D_v + \sum_{s \in \mathcal S(\beta)}
h(s)\phi^s_v .
$$
\end{thm}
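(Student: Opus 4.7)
The plan is to assemble the result by combining the description of gauge invariant KMS states in terms of almost harmonic vectors with the structural results on such vectors and the identification of which components and sinks contribute to them.

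First I would invoke Lemma \ref{a2} to obtain a bijection between gauge invariant $\beta$-KMS states $\varphi$ for $\alpha^F$ and normalized $A(\beta)$-almost harmonic vectors $\psi \in [0,\infty)^V$ via formula (\ref{a3}). In particular, for the given $\varphi$ there is a unique such $\psi$ with $\varphi(S_\mu S_\nu^*) = \delta_{\mu,\nu} e^{-\beta F(\mu)} \psi_{r(\mu)}$.

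Next I would apply Corollary \ref{b1} with $B = A(\beta)$: the normalized $A(\beta)$-almost harmonic vectors form a finite dimensional simplex whose extreme points are exactly $\{\phi^C : C \text{ an } A(\beta)\text{-harmonic component}\} \cup \{\phi^s : s \text{ an } A(\beta)\text{-summable sink}\}$. Hence $\psi$ admits a unique expression as a convex combination of these extreme points, say $\psi = \sum_C \lambda_C \phi^C + \sum_s \mu_s \phi^s$ where the coefficients are nonnegative and sum to $1$.

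Now I translate the extreme points into the language of KMS components and sinks. For $\beta > 0$, Lemma \ref{april2}(i) says the non-circular $A(\beta)$-harmonic components are precisely the non-circular KMS components of positive type $C$ with $\beta_C = \beta$, i.e.\ the set $\mathcal C(\beta)$; Lemma \ref{april4}(i) says the circular $A(\beta)$-harmonic components are precisely the circular KMS components of positive type $D$ with $\beta \in I_D$, i.e.\ the set $\mathcal Z(\beta)$; and Lemma \ref{april5}(i) says the $A(\beta)$-summable sinks are exactly the KMS sinks $s$ of positive type with $\beta \in I_s$, i.e.\ $\mathcal S(\beta)$. The case $\beta < 0$ is handled identically using parts (ii) of the three lemmas, giving in both cases that the extreme points of the simplex are indexed by $\mathcal C(\beta) \cup \mathcal Z(\beta) \cup \mathcal S(\beta)$. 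Setting $f(C) = \lambda_C, g(D) = \lambda_D, h(s) = \mu_s$ and substituting into (\ref{a3}) yields the claimed formula, and uniqueness of $f,g,h$ follows from the uniqueness of the convex combination in Corollary \ref{b1}. There is no substantive obstacle here; the theorem is essentially a repackaging of the preceding lemmas, and the only point requiring a little care is the case analysis $\beta > 0$ versus $\beta < 0$ when invoking Lemmas \ref{april2}, \ref{april4}, \ref{april5}.
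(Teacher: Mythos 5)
Your proof is correct and follows exactly the route the paper intends: the paper states Theorem \ref{gauge} without proof as a summary of Lemma \ref{a2}, Corollary \ref{b1} and Lemmas \ref{april2}, \ref{april4}, \ref{april5}, which is precisely the assembly you carry out. The only point worth noting is the one you already flag, namely that for $\beta>0$ (resp. $\beta<0$) the sets $\mathcal C(\beta)$, $\mathcal Z(\beta)$, $\mathcal S(\beta)$ automatically consist of positive-type (resp. negative-type) objects, so parts (i) and (ii) of those lemmas together cover all of $\mathbb R\setminus\{0\}$.
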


%For many choices of the function $F$, all KMS states are gauge
%invariant and Theorem \ref{gauge} gives the full story, except in the
%case $\beta = 0$ which we shall deal with below. This is f.ex. the case when $F$ does not annihilate a
%loop, i.e. when $F(\mu) \neq 0$ for every loop $\mu$ in $G$. %It may
%therefore be very tempting to stop the investigations here; in
%particular because the study of the KMS states that are not gauge
%invariant requires a different appraoch. 

\section{Including the KMS states that are not gauge invariant}\label{groupoid}

To handle KMS states that are not gauge invariant we draw on the
results of Neshveyev, \cite{N}. For this it is necessary to introduce the
groupoid picture of $C^*(G)$.

Originally graph $C^*$-algebras were introduced using groupoids,
\cite{KPRR}, but only for row-finite graphs without sinks. For general
graphs the realization as a groupoid $C^*$-algebra was obtained by
A. Paterson in \cite{Pa}. To describe the groupoid for a general graph, possibly infinite
but countable, let $P_f(G)$ and
$P(G)$ denote the set of finite and infinite paths in $G$,
respectively. The range and source maps, $r$ and $s$ on edges, extend in the
natural way to $P_f(G)$; the source map also to $P(G)$. A vertex $v
\in V$ will be considered as a finite path of length $0$ and we set
$r(v) = s(v) =v$ when $v$ is considered as an element of $P_f(G)$. Let
$V_{\infty}$ be the set of vertexes $v$ that are either sinks, or infinite
emitters in the sense that $s^{-1}(v)$ is infinite. The unit space
$\Omega_G$ of $\mathcal G$ is the union
$\Omega_G = P(G) \cup Q(G)$,
where 
$$
Q(G) = \left\{p \in P_f(G): \ r(p) \in V_{\infty} \right\} 
$$ 
is the set of finite paths that terminate at a vertex
in $V_{\infty}$. In particular, $V_{\infty} \subseteq Q(G)$ because
vertexes are considered to be finite paths of length $0$. For any $p
\in P_f(G)$, let $|p|$ denote the length of $p$. When $|p| \geq 1$, set
$$
Z(p) = \left\{ q \in \Omega_G: \ |q| \geq |p| , \ q_i = p_i, \ i = 1,2,
  \cdots, |p| \right\},
$$
and
$$
Z(v) = \left\{ q \in \Omega_G : \ s(q) = v\right\}
$$
when $v \in V$. When $\nu \in P_f(G)$ and $F$ is a finite subset of $P_f(G)$, set
\begin{equation}\label{niels})
Z_F(\nu) = Z(\nu) \backslash \left(\bigcup_{\mu \in F} Z(\mu)\right) .
\end{equation}
The sets $Z_F(\nu)$ form a basis of compact and open subsets for a locally compact Hausdorff
topology on $\Omega_G$. \footnote{Since we here deal with finite
  graphs where there are no infinite emitters, the topology has as an alternative
  basis the sets $Z(\nu)$, corresponding to $Z_F(\nu)$ with $F = \emptyset$.} When $\mu \in P_f(G)$ and $  x \in \Omega_G$, we can define the
concatenation $\mu x \in \Omega_G $ in the obvious way when $r(\mu) =
s(x)$. The groupoid $\mathcal G$ consists of the
elements in $\Omega_G \times \mathbb Z \times \Omega_G$ of the form
$$
(\mu x, |\mu| - |\mu'|, \mu'x),
$$
for some $x\in \Omega_G$ and some $ \mu,\mu' \in P_f(G)$. The product
in $\mathcal G$ is defined by
$$
(\mu x, |\mu| - |\mu'|, \mu' x)(\nu y, |\nu| -|\nu'|, \nu' y) = (\mu
x, \ |\mu | + |\nu| - |\mu'| - |\nu'|, \nu' y),
$$ 
when $\mu' x = \nu y$, and the involution by $(\mu x, |\mu| - |\mu'|,
\mu'x)^{-1} = (\mu' x, |\mu'| - |\mu|, \mu x)$. To describe the
topology on $\mathcal G$, let $Z_{F}(\mu)$ and $Z_{F'}(\mu')$ be two
sets of the form (\ref{niels}) with $r(\mu) = r(\mu')$. The topology we
shall consider has as a basis the sets of the form
\begin{equation}\label{top}
\left\{ (\mu x, |\mu| - |\mu'|, \mu' x) : \ \mu x \in Z_F(\mu), \
  \mu'x \in Z_{F'}(\mu') \right\} .
\end{equation}
With this topology $\mathcal G$ becomes an \'etale second countable locally compact Hausdorff groupoid and we can consider the reduced $C^*$-algebra $C^*_r(\mathcal
G)$ as in
\cite{Re}. As shown by Paterson in
\cite{Pa} there is an isomorphism $C^*(G) \to
  C^*_r(\mathcal G)$ which sends $S_e$ to $1_e$, where $1_e$ is the
  characteristic function of the compact and open set
$$
\left\{ (ex, 1, r(e)x) : \ x \in \Omega_G \right\} \ \subseteq \
\mathcal G,
$$  
and $P_v$ to $1_v$, where $1_v$ is the characteristic function of the
compact and open set
$$
\left\{ (vx,0,vx) \ : \ x \in \Omega_G \right\} \ \subseteq \ \mathcal G.
$$ 
In the following we use the identification $C^*(G) = C_r^*(\mathcal
G)$ and identify $\Omega_G$ with the unit space of $\mathcal G$ via
the embedding
$\Omega_G \ni x \ \mapsto \ ( x, 0,x)$. In this way we get a canonical
embedding $C(\Omega_G) \subseteq C^*(G)$ and there is a conditional
expectation $P : C^*(G) \to C(\Omega_G)$ defined such that
$$
P(f)(x) = f(x,0,x)
$$
when $f \in C_c(\mathcal G)$, cf. \cite{Re}. This conditional
expectation can be used to characterise the gauge invariant KMS states
because it follows from Theorem 2.2 in \cite{Th2} that a KMS state for
$\alpha^F$ is
gauge invariant if and only if it factorises through $P$.

%\begin{remark}\label{a25}
%We note that the operator $S_{\mu}S_{\nu}^*$ coming from two finite
%paths $\mu,\nu \in P_f(G)$ with $r(\mu) = s(\nu)$, in the groupoid
%picture is given by the characteristic function of the set
%$$
%\left\{ (\mu x, |\mu| - |\nu|, \nu x) : \ x \in \Omega_G, \ s(x) =
%  r(\mu) = r(\nu) \right\} .
%$$
%\end{remark}

To describe the automorphism group $\alpha^F$ in the groupoid picture
we define a continuous homomorphism $c_{F} : \mathcal G \to \mathbb R$ by
$$
c_{F}(ux,|u|-|u'|,u'x) = F(u)- F(u').
$$
The automorphism group $\alpha^F$ on $C^*_r(\mathcal G)$ is then 
defined such that
$$
\alpha^{F}_t(f)(\gamma) = e^{it c_{F}(\gamma)} f(\gamma)
$$
when $f \in C_c(\mathcal G)$, cf. \cite{Re}.

Thanks to this picture of $C^*(G)$ and $\alpha^F$, and because we
consider finite graphs in this paper, we can draw
on the results of Neshveyev, \cite{N}, to obtain a decomposition of
the KMS states into those that are gauge invariant and those that are
not. Since the groupoid $\mathcal G$ has
the additional properties required in Section 2 of \cite{Th1} we can
use the description obtained in Theorem 2.4 of \cite{Th1} when $\beta
\neq 0$. Of the $\beta$-KMS states considered in Theorem 2.4 in
\cite{Th1}, it is only those of the form
$\omega^{\varphi}_{\mathcal O}$
which may not factor through $P$. Here $\mathcal O$ is an orbit in
$\Omega_G$ under the canonical action of the groupoid $\mathcal G$ on its unit
space, and $\mathcal O$ must be consistent and $\beta$-summable for
$\omega^{\varphi}_{\mathcal O}$ to be defined. Furthermore, the
formula for $\omega^{\varphi}_{\mathcal O}$ shows that it is only
if the points in $\mathcal O$ have non-trivial isotropy group in
$\mathcal G$ that $\omega^{\varphi}_{\mathcal O}$ does not factor
through $P$.

Note that the isotropy group $\mathcal G^x_x \subseteq \mathcal G$ of
an element $x\in \Omega_G$ is trivial unless $x$ is an infinite
path in $G$ which is pre-period. Its orbit under $\mathcal G$ is
then the orbit of an infinite periodic path. We may
therefore assume that there is a loop $\delta$ in $G$ such that $x = \delta^{\infty} \in P(G)$. Then
$$
\mathcal G_x^x = \left\{ (x,kp,x) : \ k \in \mathbb Z \right\} ,
$$
where $p$ is the period of $\delta^{\infty}$. We may assume that $p =
|\delta|$ and find then that $c_F(x,kp,x) = {kF(\delta)}$. It follows 
that the $\mathcal G$-orbit $\mathcal
Gx$ is consistent in the sense used in \cite{Th1} if and only if
$F(\delta) = 0$. If the component of $G$ containing $\delta$ contains
a second loop, there will be another loop $\delta'$ in $G$ starting and
ending at the same vertex as $\delta$. Then
$$
x_n = \delta^n \delta' \delta^{\infty} , \ n \in \mathbb N,
$$   
are distinct elements in $\mathcal Gx$, and when we use the notation from \cite{Th1}, we have that
$$
l_x(x_n) = e^{-F(\delta')} .
$$
This shows that 
$$
\sum_{z \in \mathcal Gx} l_x(z)^{\beta} =
\infty
$$ 
for all $\beta \in \mathbb R$, and we conclude therefore that
$\mathcal G x$ is not $\beta$-summable for any $\beta \in \mathbb R$. It follows that the only $\mathcal G$-orbits of elements with
non-trivial isotropy groups which can be both consistent and
$\beta$-summable in the sense of \cite{Th1}, are the $\mathcal
G$-orbits of a periodic infinite path lying in a circular component
consisting of a loop $\delta$ with $F(\delta) = 0$. On the other hand, for such an infinite path $x$ the
corresponding $\mathcal
G$-orbit will be $\beta$-summable if and only if 
\begin{equation}\label{b36}
\sum_{\mu \in E^*_{\delta}s(x)} e^{-\beta F(\mu)} < \infty  , 
\end{equation}
where $E^*_{\delta}s(x)$ denotes the set of finite paths $\mu$ in $G$ that terminate at
$s(x) \in V$ and do not contain $\delta$. Note that
(\ref{b36}) will hold if and only if $C$ is a circular KMS component
with $\beta \in I_C$. In this case the
$\beta$-KMS state $\omega^{\varphi}_{\mathcal O}$ is defined for every
state $\varphi$ on $C^*\left(\mathcal G^x_x\right)$, but it will only
be extremal when $\varphi$ is a pure state. By using the
identification $C^*(\mathcal G_x^x) = C(\mathbb T)$ this means that the
extremal $\beta$-KMS states occurring in Theorem 2.4 in \cite{Th1} that are not
gauge invariant arise from a number $\lambda \in \mathbb T$,
considered as a pure state on $C(\mathbb T)$, and a component $C$ of
zero type with $\beta \in I_C$. We will denote this extremal $\beta$-KMS state
by $\omega^{\lambda}_C$. The
formula for this state, as it was given in \cite{Th1}, becomes 
\begin{equation}\label{b30}
\begin{split}
& \omega^{\lambda}_{C} (f) = 
  \left( \sum_{\nu \in E^*_{\delta}s(x)} e^{-\beta F(\nu)} \right)^{-1} \sum_{k
    \in \mathbb Z} \sum_{\mu \in E^*_{\delta}s(x)} \lambda^k e^{-\beta F(\mu)}
  f\left(\mu x,kp,\mu x\right) \\
%& = \left( \sum_{\nu \in E^*v} e^{-\beta F(\nu)} \right)^{-1} \left(\sum_{\mu \in E^*x} e^{-\beta F(\mu)} f\left(\mu
%  \delta^{\infty},0,\mu \delta^{\infty}\right) + \sum_{k \in \mathbb Z
%  \backslash \{0\}} \lambda^k  f\left(\delta^{\infty},kp,\delta^{\infty}\right)\right)
\end{split} 
\end{equation}
when $f \in C_c(\mathcal G)$. A general state $\varphi$ on $C(\mathbb T)$ is
given by integration against a Borel probability measure $\mu$ on $\mathbb T$ and the
corresponding $\beta$-KMS state $\omega^{\varphi}_{\mathcal O}$ from
\cite{Th1}, which we in the present setting will denote by
$\omega^{\mu}_C$, is then given as an
integral
\begin{equation}\label{b83}
\omega^{\mu}_C(a) = \int_{\mathbb T} \omega^{\lambda}_C(a) \
d\mu(\lambda) .
\end{equation}
The conclusions we need here can then be summarised in the following way.

\begin{lemma}\label{b49} Let $\beta \in \mathbb R \backslash
  \{0\}$. For every $\beta$-KMS state $\varphi$ for $\alpha^F$ there
  is a Borel probability measure $\nu$ on $\Omega_G$, Borel
  probability measures $\mu_{D}, D \in \mathcal Z(\beta)$, on
  $\mathbb T$ and numbers $t$ and $t_{D}, D \in \mathcal Z(\beta)$, in
  $[0,1]$ such that $t + \sum_{D \in \mathcal Z(\beta)} t_D = 1$ and
\begin{equation}\label{b48}
\varphi(a) = t \int_{\Omega_G} P(a) \ d\nu + \sum_{D \in \mathcal
  Z(\beta)} t_{D} \omega^{\mu_{D}}_{D}(a) .
\end{equation}
The numbers $t$ and $t_D$ are uniquely determined by $\varphi$, as are
the Borel probability measures $\mu_D$ with $t_D > 0$.
\end{lemma}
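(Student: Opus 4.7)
The plan is to apply Theorem 2.4 of \cite{Th1}, which describes every $\beta$-KMS state of $\alpha^F$ as a combination of states of the form $\omega^{\psi}_{\mathcal{O}}$, indexed by consistent $\beta$-summable $\mathcal{G}$-orbits $\mathcal{O}$ in $\Omega_G$ together with a state $\psi$ on the isotropy $C^*$-algebra $C^*(\mathcal{G}^x_x)$ for some $x \in \mathcal{O}$. Applied to the given $\varphi$, this yields an initial decomposition which I would split into the contribution from orbits with trivial isotropy and the contribution from orbits with non-trivial isotropy.

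The orbits with trivial isotropy contribute only states that factor through $P$, as recorded in the discussion preceding the lemma. Gathered together, this trivial-isotropy piece (if non-zero) is a positive functional on $C^*(G)$ of total mass $t \in [0,1]$ that factors through $P$, hence can be written as $t \int_{\Omega_G} P(a)\, d\nu$ for a Borel probability measure $\nu$ on $\Omega_G$ via the Riesz representation theorem applied to the restriction to $C(\Omega_G) \subseteq C^*(G)$. For the orbits with non-trivial isotropy, the analysis leading up to (\ref{b36}) showed that consistency forces such an orbit to be the $\mathcal{G}$-orbit of $\delta^\infty$ for a loop $\delta$ with $F(\delta) = 0$, and $\beta$-summability then forces the component $D$ containing $\delta$ to be a circular KMS component with $\beta \in I_D$, i.e.\ $D \in \mathcal{Z}(\beta)$. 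Since the isotropy group in that case is $\mathbb{Z}$ and $C^*(\mathcal{G}^x_x) \cong C(\mathbb{T})$, the corresponding state is integration against a Borel probability measure $\mu_D$ on $\mathbb{T}$, producing a contribution $t_D\, \omega^{\mu_D}_D$ of the form (\ref{b83}). Assembling the two parts gives (\ref{b48}) with $t + \sum_{D \in \mathcal{Z}(\beta)} t_D = 1$.

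For the uniqueness assertion, the coefficients $t_D$ and the measures $\mu_D$ (when $t_D > 0$) can be recovered from $\varphi$ by evaluating it on functions in $C_c(\mathcal{G})$ supported in the isotropy bundle over the $\mathcal{G}$-orbit $\mathcal{O}_D$ of $\delta^\infty$: for $D' \neq D$ the state $\omega^{\mu_{D'}}_{D'}$ vanishes on such functions by (\ref{b30}) since it is concentrated on $\mathcal{O}_{D'}$, and the gauge-invariant summand $t \int P(a)\, d\nu$ vanishes on functions supported off the unit space of $\mathcal{G}$. The value of $\omega^{\mu_D}_D$ on functions concentrated on the fibre $\{(\mu\delta^\infty, kp, \mu\delta^\infty)\}$ encodes, via (\ref{b30}), the $k$-th Fourier coefficient of $\mu_D$, so $\mu_D$ is pinned down; the mass $t$ is then forced as $1 - \sum_D t_D$. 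The main obstacle is really the first step, namely unpacking the conclusion of Theorem 2.4 of \cite{Th1} in the present groupoid setting---already flagged as legitimate since $\mathcal{G}$ satisfies the hypotheses needed in Section 2 of \cite{Th1}---after which the remaining work is an assembly of facts already established in the preceding subsections.
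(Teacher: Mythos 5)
Your existence argument coincides with the paper's: Lemma \ref{b49} is stated there precisely as a summary of the discussion preceding it, namely an application of Theorem 2.4 of \cite{Th1} together with the identification of the consistent, $\beta$-summable orbits with non-trivial isotropy as the orbits of $\delta^{\infty}$ for circular KMS components $D \in \mathcal Z(\beta)$, everything else factoring through $P$. That part of your write-up is in order.

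Your uniqueness argument, however, has a genuine gap. Testing $\varphi$ against functions supported in the isotropy bundle over $\mathcal O_D$ but off the unit space recovers only the quantities $t_D\,\widehat{\mu_D}(k)$ for $k \neq 0$; this determines $t_D(\mu_D - m)$, where $m$ is normalized Lebesgue measure on $\mathbb T$, but it does not ``pin down'' $\mu_D$ or $t_D$ separately, and the $k=0$ fibre cannot be used to finish because the gauge invariant summand does not vanish there. The obstruction is exactly the remark the paper makes immediately after the lemma: $\omega^{m}_D$ coincides with the gauge invariant state $\varphi_D$, hence factors through $P$ and can be absorbed into the term $t\int_{\Omega_G} P(a)\, d\nu$ by enlarging $\nu$. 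Consequently a decomposition of the literal form (\ref{b48}) is not unique without a further normalization (for instance requiring $\nu(\mathcal O_D)=0$ for every $D$, or declaring the decomposition to be the canonical one produced by Theorem 2.4 of \cite{Th1} and importing its uniqueness clause). You need either to invoke that uniqueness directly or to state and use such a normalization; as written, your recovery argument fails whenever some $\mu_D$ has vanishing non-zero Fourier coefficients, the extreme case being $\mu_D = m$.
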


The measure $\nu$ in Lemma \ref{b49} have certain properties which reflect that $\varphi$
is a KMS state, and they can be found in \cite{Th1}, but what matters
here is only that 
$$
a \mapsto  \int_{\Omega_G} P(a) \ d\nu
$$
is $\beta$-KMS state which is gauge invariant. It is therefore a
convex combination of the states $\varphi_C, \varphi_s, \varphi_D$
given by the formula (\ref{a3}) when the vector $\psi$ occurring there
is substituted by the $A(\beta)$-almost harmonic vectors $\phi^C, C \in \mathcal
C(\beta), \phi^s, s \in \mathcal S(\beta),$ and $\phi^D, D \in
\mathcal Z(\beta)$, respectively. Note that the state $\varphi_D$
corresponding to a component $D \in \mathcal Z(\beta)$ is the same
as the state $\omega^m_D$ from (\ref{b83}) when $m$ is the normalized Lebesgue measure
on $\mathbb T$. We can therefore now use
Theorem 2.4 in \cite{Th1} and combine Lemma \ref{b49} with Theorem
\ref{gauge} to obtain the following description of the $\beta$-KMS
states when $\beta \neq 0$. 

\begin{thm}\label{MAIN} For $\beta  \in \mathbb R \backslash \{0\}$, 
\begin{enumerate}
\item[$\bullet$] let $\mathcal C(\beta)$ be
  the set of non-circular KMS components $C$ in $G$ with $\beta_C = \beta$,
\item[$\bullet$] let $\mathcal S(\beta)$ be the set of KMS sinks $s$ in $G$
  with $\beta \in I_s$, and
\item[$\bullet$] let $\mathcal Z(\beta)$ be the set of circular KMS components $D$ with
  $\beta \in I_D$. 
\end{enumerate}
For every $\beta$-KMS state $\varphi$ for
  $\alpha^F$ there are numbers
  $\alpha_C \in [0,1], C \in \mathcal C(\beta)$, $\alpha_s \in [0,1],
  s \in \mathcal S(\beta)$, and $\alpha_{D} \in [0,1],D \in \mathcal
  Z(\beta)$, and Borel probability measures $\mu_{D}, D \in \mathcal
  Z(\beta)$, on $\mathbb T$, such that $\sum_C\alpha_C
   + \sum_s \alpha_s + \sum_D\alpha_D = 1$, and
$$
\varphi \ = \sum_{C \in \mathcal C(\beta)} \alpha_C\varphi_C \ + \  \sum_{s \in
  \mathcal S(\beta)} \alpha_s\varphi_s  \ + \ \sum_{D \in \mathcal Z(\beta)}
\alpha_{D} \omega^{\mu_{D}}_{D} .
$$
The numbers $\alpha_C, \alpha_s, \alpha_D$ are uniquely
determined by $\varphi$, as are the Borel probability measures $\mu_{D}$
for the components $D \in \mathcal Z(\beta)$ with $\alpha_{D} > 0$.
\end{thm}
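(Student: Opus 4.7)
The plan is to derive the theorem by combining Lemma \ref{b49} with Theorem \ref{gauge}. Starting from a $\beta$-KMS state $\varphi$ for $\alpha^F$, Lemma \ref{b49} gives a unique decomposition
\[
\varphi(a) \;=\; t \int_{\Omega_G} P(a) \, d\nu \;+\; \sum_{D \in \mathcal{Z}(\beta)} t_{D}\,\omega^{\mu_{D}}_{D}(a), \qquad t + \sum_D t_D = 1.
\]
When $t > 0$, the first summand (scaled by $1/t$) is a gauge invariant $\beta$-KMS state, and Theorem \ref{gauge} expresses it as a convex combination
\[
\tfrac{1}{t}\!\int P(a)\,d\nu \;=\; \sum_{C \in \mathcal{C}(\beta)} f(C)\varphi_C(a) + \sum_{D \in \mathcal{Z}(\beta)} g(D)\varphi_D(a) + \sum_{s \in \mathcal{S}(\beta)} h(s)\varphi_s(a),
\]
with nonnegative weights $f,g,h$ summing to $1$; the case $t = 0$ is vacuous.

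To reach the single form appearing in the theorem I would use the remark made just before the statement, namely that the gauge invariant state $\varphi_D$ associated with a circular component $D \in \mathcal{Z}(\beta)$ coincides with $\omega^{m}_{D}$ for $m$ the normalized Lebesgue measure on $\mathbb{T}$. Since $\mu \mapsto \omega^{\mu}_{D}$ is linear by formula (\ref{b83}), the two contributions at each $D \in \mathcal{Z}(\beta)$ combine as
\[
t g(D)\,\omega^{m}_{D} + t_{D}\,\omega^{\mu_{D}}_{D} \;=\; \alpha_{D}\,\omega^{\mu'_{D}}_{D},
\]
with $\alpha_{D} := t g(D) + t_{D}$ and $\mu'_{D} := \bigl(t g(D)\,m + t_{D}\mu_{D}\bigr)/\alpha_{D}$ whenever $\alpha_{D} > 0$. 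Setting $\alpha_{C} := t f(C)$ and $\alpha_{s} := t h(s)$ then produces the stated decomposition, and a direct check gives $\sum_C \alpha_C + \sum_s \alpha_s + \sum_D \alpha_D = t + \sum_D t_D = 1$.

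The main obstacle is the uniqueness assertion: having merged the Lebesgue piece $t g(D) m$ into $\mu'_D$, one cannot directly read uniqueness off from Lemma \ref{b49}. I would address this by appealing to the structure of the $\beta$-KMS simplex. Theorem 2.4 of \cite{Th1}, combined with Theorem \ref{gauge} and the identification $C^{*}(\mathcal{G}^{x}_{x}) \cong C(\mathbb{T})$ for periodic $x$, identifies the set of extreme $\beta$-KMS states as $\{\varphi_C : C \in \mathcal{C}(\beta)\} \cup \{\varphi_s : s \in \mathcal{S}(\beta)\} \cup \{\omega^{\lambda}_{D} : D \in \mathcal{Z}(\beta),\, \lambda \in \mathbb{T}\}$, and every $\beta$-KMS state admits a unique representing probability measure on this set of extreme points. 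Projecting that measure onto the discrete atoms determines $\alpha_C$ and $\alpha_s$ uniquely; projecting onto the circle over each $D \in \mathcal{Z}(\beta)$ gives the finite measure $\alpha_D\,\mu'_D$, from which both $\alpha_D$ and, when $\alpha_D > 0$, the measure $\mu'_D$ itself are uniquely recovered, completing the proof.
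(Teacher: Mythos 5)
Your proposal is correct and follows essentially the same route as the paper: the authors likewise obtain the decomposition from Lemma \ref{b49}, feed the gauge invariant part into Theorem \ref{gauge}, absorb the term $\varphi_D=\omega^{m}_{D}$ (with $m$ the normalized Lebesgue measure) into the measure over each circular component via the linearity of $\mu\mapsto\omega^{\mu}_{D}$ in (\ref{b83}), and settle uniqueness by the simplex structure coming from Theorem 2.4 of \cite{Th1}. Your explicit formula $\mu'_{D}=\bigl(t\,g(D)\,m+t_{D}\mu_{D}\bigr)/\alpha_{D}$ and the remark that uniqueness must be rechecked after this merging are exactly the details the paper leaves implicit.
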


%The following section is devoted to the $\beta =0$ case of Theorem \ref{MAIN}.

\subsection{Trace states}\label{traces}

We need a different approach when $\beta =0$. Since the $0$-KMS
states are the trace states of $C^*(G)$ we must determine these. 

Let $\mathcal Z(0)$ denote the set of circular components $C$ in $G$
with the property that $\overline{C}\backslash C$ does not contain any
components, and similarly $\mathcal S(0)$ the set of sinks $s$ in $G$
such that $\overline{\{s\}} \backslash \{s\}$ does not contain a
component. For every $C \in \mathcal Z(0)$ the set $V \backslash
\overline{C}$ is hereditary and saturated, and there is a surjective
$*$-homomorphism $\pi_C : C^*(G) \to C^*(\overline{C})$, where
$\overline{C}$ is considered as a directed graph with vertex set
$\overline{C} \subseteq V$ and the edge set $\left\{e \in E: \
  s(e),r(e) \in \overline{C} \right\}$, cf. Theorem 4.1 in
\cite{BPRS}. Similarly, when $s\in \mathcal S(0)$ there is also a
surjective $*$-homorphism  $\pi_s : C^*(G) \to C^*(\overline{\{s\}})$, where
$\overline{\{s\}}$ is considered as a directed graph with vertex set
$\overline{\{s\}} \subseteq V$ and the edge set $\left\{e \in E: \
  s(e),r(e) \in \overline{\{s\}} \right\}$. 

When $s \in \mathcal S(0)$ we let $n_s$ be the number of paths
in $G$ terminating at $s$. When $C \in \mathcal Z(0)$ we choose a
vertex $v_C \in C$ and set
$$
n_C = \# \left\{ \mu \in P_f(G): \ r(\mu) = v_C, \ s(\mu_i) \neq
  v_C, \ \text{for} \ i \leq |\mu| \right\} ,
$$ 
where the condition that $ s(\mu_i) \neq
  v_C$ is negligible when $|\mu| = 0$.

%The following theorem completes the proof of Theorem \ref{MAIN}.

\begin{thm}\label{L11} For every $s \in \mathcal S(0)$,
$$
C^*(\overline{\{s\}}) \simeq M_{n_s}(\mathbb C), 
$$
and for every $C \in \mathcal Z(0)$,
$$
C^*(\overline{C}) \simeq M_{n_C}\left(C(\mathbb T)\right), \ C \in
\mathcal Z(0).
$$
For every trace state $\omega$ on $C^*(G)$ there are unique 
numbers $\alpha_s \in [0,1]$ and $\alpha_C \in [0,1]$, and trace states $\omega_s$ on $C^*(\overline{\{s\}})$
and $\omega_C$ on $C^*(\overline{C})$, $s \in \mathcal S(0), \ C \in
\mathcal Z(0)$, such that
$$
\sum_{s \in \mathcal S(0)} \alpha_s + \sum_{C \in \mathcal Z(0)}
\alpha_C = 1
$$
and
$$
\omega = \sum_{s \in \mathcal S(0)} \alpha_s \omega_s \circ \pi_s +
\sum_{C \in \mathcal Z(0)} \alpha_C \omega_C \circ \pi_C .
$$
\end{thm}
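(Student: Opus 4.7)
The plan is to proceed in three stages: identify the matrix algebras and the surjections $\pi_s,\pi_C$; decompose the diagonal data of an arbitrary trace via Corollary~\ref{b1}; and then extract the component trace states from corner restrictions.

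For the first stage, the surjections $\pi_s,\pi_C$ follow from Theorem~4.1 of \cite{BPRS} once one verifies that $V\setminus\overline{\{s\}}$ and $V\setminus\overline{C}$ are saturated hereditary: heredity is immediate, and saturation holds because any non-sink in $\overline{\{s\}}$ (resp.\ $\overline{C}$) must emit at least one edge terminating inside the closure, namely the first edge of a forward path to $s$ (resp.\ to $C$). Since $\overline{\{s\}}$ has no loops and $s$ as its unique sink, $\{S_\mu S_\nu^* : r(\mu)=r(\nu)=s\}$ is a complete system of $n_s\times n_s$ matrix units spanning $C^*(\overline{\{s\}})$, giving $M_{n_s}(\mathbb C)$. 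For $\overline{C}$, the only edge emitted from $v_C$ inside $\overline{C}$ is the initial edge of $\delta$ (any other would create a second loop), forcing $S_\delta S_\delta^*=P_{v_C}$ in the quotient; hence $S_\delta$ is a unitary in the corner $P_{v_C}C^*(\overline{C})P_{v_C}\simeq C(\mathbb T)$, and combining with the $n_C$ matrix units indexed by the paths defining $n_C$ identifies $C^*(\overline{C})$ with $M_{n_C}(C(\mathbb T))$.

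For the second stage, let $\omega$ be a trace on $C^*(G)$. The identity $\omega(S_eS_e^*)=\omega(S_e^*S_e)=\omega(P_{r(e)})$ combined with the Cuntz--Krieger relations~(\ref{CKrel}) shows that $\psi_v:=\omega(P_v)$ is a normalised $A(0)$-almost harmonic vector. By Lemmas~\ref{b18}(iii) and~\ref{b19} the $A(0)$-harmonic components are precisely $\mathcal Z(0)$ (circular components are the only ones with $\rho(A(0)^C)=1$, and $\rho(A(0)^{\overline{C}\setminus C})<1$ exactly when $\overline{C}\setminus C$ contains no component, since its adjacency matrix is then nilpotent), and an analogous counting argument identifies the $A(0)$-summable sinks with $\mathcal S(0)$. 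Corollary~\ref{b1} then yields unique $\alpha_C,\alpha_s\in[0,1]$ summing to $1$ with $\psi=\sum_{C\in\mathcal Z(0)}\alpha_C\phi^C+\sum_{s\in\mathcal S(0)}\alpha_s\phi^s$. Since the supports of $\phi^C$ and $\phi^s$ lie in $\overline{C}$ and $\overline{\{s\}}$ respectively, $\omega(P_v)=0$ for every $v\in K:=V\setminus\bigl(\bigcup_C\overline{C}\cup\bigcup_s\overline{\{s\}}\bigr)$, and the trace property forces $\omega$ to annihilate the ideal generated by these projections.

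For the third stage I extract $\omega_C$ through the corner $P_{v_C}C^*(G)P_{v_C}$. Any path $\mu$ with $s(\mu)=r(\mu)=v_C$ must remain on $\delta$ (a path leaving $\overline{C}$ cannot return to $C$), so this corner is the unital $C^*$-algebra generated by $S_\delta$, and the trace identity $\omega(S_\delta S_\delta^*)=\omega(S_\delta^*S_\delta)$ makes $S_\delta$ act as a unitary in the GNS representation of $\omega$. Moreover distinct elements of $\mathcal Z(0)$ are mutually unreachable and no vertex of $C$ reaches any $s\in\mathcal S(0)$ (otherwise the loop $\delta$ would lie in $\overline{\{s\}}\setminus\{s\}$, contradicting $s\in\mathcal S(0)$), so $\omega(P_{v_C})=\alpha_C\phi^C_{v_C}$; consequently $\omega$ restricted to the corner corresponds to a positive measure on $\mathbb T$ of total mass $\alpha_C\phi^C_{v_C}$, whose normalisation determines the trace state $\omega_C$ on $M_{n_C}(C(\mathbb T))\simeq C^*(\overline{C})$. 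The sink case is parallel but trivial since $P_sC^*(G)P_s=\mathbb C P_s$ and $M_{n_s}(\mathbb C)$ admits a unique trace. The main obstacle is verifying the global identity $\omega=\sum_C\alpha_C\omega_C\circ\pi_C+\sum_s\alpha_s\omega_s\circ\pi_s$ on all of $C^*(G)$: I would check it on the spanning set~(\ref{span}) by using the trace property to reduce $\omega(S_\mu S_\nu^*)$ to values in appropriate corners and matching these against the computed corner traces; uniqueness of the $\alpha$'s is immediate from Corollary~\ref{b1}.
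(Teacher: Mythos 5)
Your overall architecture matches the paper's: show that every trace state kills the ideal generated by the projections of the ``irrelevant'' vertices, identify the surviving pieces as $M_{n_s}(\mathbb C)$ and $M_{n_C}(C(\mathbb T))$, and read off the decomposition. Your second stage is in fact a nice variant: deriving the vanishing of $\omega$ on $I_N$ by observing that $v\mapsto\omega(P_v)$ is a normalized $A(0)$-almost harmonic vector and invoking Corollary \ref{b1}, together with the identification of the $A(0)$-harmonic components with $\mathcal Z(0)$ and the $A(0)$-summable sinks with $\mathcal S(0)$, is cleaner than the paper's direct combinatorial argument in Lemma \ref{L12} and gives the uniqueness of the coefficients $\alpha_C,\alpha_s$ for free.

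The gap is in your third stage, and it is the step that carries the theorem. What the paper proves in Lemma \ref{L13}, and you do not, is that the quotient $C^*(G)/I_N\simeq C^*(\tilde G)$ is the \emph{direct sum} $\bigl(\bigoplus_s M_{n_s}(\mathbb C)\bigr)\oplus\bigl(\bigoplus_C M_{n_C}(C(\mathbb T))\bigr)$, with the projection onto each summand having the same kernel as $\pi_s$ resp.\ $\pi_C$. Once this is known, a trace state on a finite direct sum is automatically and uniquely a convex combination of trace states on the summands, and the displayed identity for $\omega$ is immediate; there is nothing left to ``check on the spanning set''. Your substitute --- reconstructing $\omega$ from the corners $P_{v_C}C^*(G)P_{v_C}$ --- is exactly the part you defer as ``the main obstacle'', and as sketched it starts from a false premise: in the full algebra $C^*(G)$ the corner at $v_C$ is \emph{not} generated by $S_\delta$, because $P_{v_C}=\sum_{e\in s^{-1}(v_C)}S_eS_e^*$ runs over all edges leaving $v_C$, including those whose range lies outside $\overline{C}$ (this happens in the paper's own example, where the vertices of $C_1$ emit edges into $C_2$); thus $S_\delta S_\delta^*<P_{v_C}$ in general and $S_\delta$ only becomes a unitary after passing to $C^*(\overline{C})$. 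One can repair this by noting that $\omega$ annihilates the excess, since $\omega(P_{r(e)})=0$ for such exits $e$ by the same support argument you use to get $\omega(P_{v_C})=\alpha_C\phi^C_{v_C}$, but you then still have to propagate the corner values to all of $C^*(G)$, which amounts to re-proving the direct-sum decomposition element by element. I would either prove the statement of Lemma \ref{L13} --- the paper does so by exhibiting an explicit Cuntz--Krieger $\tilde G$-family in the direct sum and invoking the gauge-invariant uniqueness theorem; note that you too need the gauge action, or something equivalent, to know that $S_\delta$ has full spectrum, so that the corner of $C^*(\overline{C})$ is all of $C(\mathbb T)$ rather than a proper quotient --- or carry out the spanning-set computation in full. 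As it stands, the final identity is asserted rather than proved.
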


For the proof of Theorem \ref{L11} set 
$$
N = V \backslash \left(\bigcup_{C \in \mathcal Z(0)} \overline{C} \cup \bigcup_{s
  \in \mathcal S(0)} \overline{\{s\}}\right) .
$$
Then $N$ is hereditary and saturated, and the set $\left\{P_v : \ v \in
  N\right\}$ generates an ideal $I_N$ in $C^*(G)$ such that
$C^*(G)/I_N \simeq C^*(\tilde{G})$ where $\tilde{G}$ is the graph with
vertex set 
$$
\tilde{V} =  \bigcup_{C \in \mathcal Z(0)} \overline{C} \cup \bigcup_{s
  \in \mathcal S(0)} \overline{\{s\}}
$$ 
and edge set $\tilde{E} = \left\{e\in E : \ r(e)
\notin N \right\}$, cf. Theorem 4.1 in \cite{BPRS}.

\begin{lemma} \label{L12}  
Let $\omega$ be a trace state on
  $C^*(G)$. Then $\omega(I_N) = 0$.
\end{lemma}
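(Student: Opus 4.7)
My plan is to pull the tracial property back to a combinatorial statement about non-negative vectors on $V$, and then invoke the classification of normalized almost harmonic vectors obtained in Corollary~\ref{b1}.

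I would first set $\psi_v = \omega(P_v)$ for $v \in V$, noting that $\sum_v \psi_v = \omega(1) = 1$ because $\{P_v\}_{v \in V}$ is a family of mutually orthogonal projections summing to the unit of $C^*(G)$. Using that $\omega$ is tracial, so that $\omega(S_eS_e^*) = \omega(S_e^*S_e) = \omega(P_{r(e)})$, the Cuntz--Krieger relation yields, for every non-sink $v$,
\[
\psi_v \;=\; \sum_{e \in s^{-1}(v)} \omega(P_{r(e)}) \;=\; \sum_{w \in V} A(0)_{vw}\,\psi_w ,
\]
identifying $\psi$ as a normalized $A(0)$-almost harmonic vector, where $A(0)$ is the adjacency matrix of $G$.

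The heart of the argument is to identify the extreme points provided by Corollary~\ref{b1}. For a component $C$ of $G$, a direct Perron--Frobenius count shows that $\rho(A(0)^C) = 1$ iff the strongly connected subgraph on $C$ is a single loop, i.e.\ iff $C$ is circular; and $\rho(A(0)^{\overline{C}\setminus C}) < 1$ iff the induced subgraph on $\overline{C}\setminus C$ contains no loop, which is equivalent to containing no component. Thus the $A(0)$-harmonic components are precisely $\mathcal{Z}(0)$. For a sink $s$, $\sum_{n \ge 0} A(0)^n_{vs}$ counts finite paths from $v$ to $s$, and this sum is finite for every $v$ iff no such path traverses a loop, equivalently $\overline{\{s\}}\setminus\{s\}$ contains no component, i.e.\ $s \in \mathcal{S}(0)$. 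Applying Corollary~\ref{b1}, one obtains
\[
\psi \;=\; \sum_{C \in \mathcal Z(0)} \alpha_C\,\phi^C \;+\; \sum_{s \in \mathcal S(0)} \alpha_s\,\phi^s,
\]
and since each $\phi^C$ is supported in $\overline{C} \subseteq \tilde V$ and each $\phi^s$ in $\overline{\{s\}} \subseteq \tilde V$, $\psi$ vanishes identically on $N = V \setminus \tilde V$. In particular, $\omega(P_v) = 0$ for every $v \in N$.

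To conclude $\omega(I_N) = 0$, I would propagate this from vertex projections to the full ideal via the tracial Cauchy--Schwarz inequality: for $v \in N$ and $a,b \in C^*(G)$,
\[
|\omega(aP_vb)|^2 \;\le\; \omega(aP_va^*)\,\omega(b^*P_vb) \;\le\; \|a\|^2\|b\|^2\,\omega(P_v)^2 \;=\; 0,
\]
so $\omega$ vanishes on the dense $*$-subspace spanned by $\{aP_vb:\, v\in N,\ a,b \in C^*(G)\}$, and hence on $I_N$. The main obstacle I anticipate is the spectral-radius identification in the third paragraph, which is where the explicit form of $\mathcal{Z}(0)$ and $\mathcal{S}(0)$ enters; the remaining steps are direct applications of the machinery already developed in the paper.
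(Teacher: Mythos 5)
Your proof is correct, but it takes a genuinely different route from the paper's. The paper argues directly with the Cuntz--Krieger relations: for a loop $v_1,\dots,v_n,v_1$ the trace property forces the chain $\omega(P_{v_1})\geq\omega(P_{v_2})\geq\cdots\geq\omega(P_{v_1})$ to collapse to equalities, whence $\omega(P_{r(e)})=0$ for every edge $e$ leaving a loop vertex but not belonging to the loop; this is then propagated to all of $N$ by a separate path-following argument (constructing from a hypothetical $v\in N$ with $\omega(P_v)>0$ an infinite edge sequence that must close up inside a circular component, contradicting $v\in N$). You instead observe that the trace property alone -- no gauge invariance needed -- makes $v\mapsto\omega(P_v)$ a normalized $A(0)$-almost harmonic vector, and then feed this into Corollary \ref{b1}; the only $\beta=0$-specific work is the identification $\rho(A(0)^C)=1\Leftrightarrow C$ circular, $\rho(A(0)^{\overline{C}\setminus C})<1\Leftrightarrow$ no loop in $\overline{C}\setminus C$ (nilpotency of the adjacency matrix of an acyclic digraph), and finiteness of the path count into a sink, all of which correctly match the paper's definitions of $\mathcal Z(0)$ and $\mathcal S(0)$. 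Since the extreme vectors $\phi^C$ and $\phi^s$ are supported in $\overline{C}$ and $\overline{\{s\}}$ respectively, the support statement $\omega(P_v)=0$ for $v\in N$ drops out of the simplex decomposition, and your Cauchy--Schwarz step (which also supplies the reduction the paper leaves implicit, namely that vanishing on the generating projections forces vanishing on $I_N$) finishes the argument. What your approach buys is economy and transparency: it shows the $\beta=0$ case is not entirely outside the reach of the Section 3 machinery, and it isolates exactly where the combinatorics of $A(0)$ enter. What the paper's approach buys is self-containedness (only positivity and the relations (\ref{CKrel})) and some intermediate facts -- e.g.\ that $\omega(P_w)=0$ for all $w$ in the hereditary closure of any component outside $\bigcup_{C'\in\mathcal Z(0)}C'$ -- that are reused in the surrounding structural analysis of $C^*(\tilde G)$.
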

\begin{proof}  It suffices to show that $\omega(P_v) = 0$ when $v \in
  N$. To this end consider a loop $\mu$ in $G$ with vertexes $v_{1}, \dots, v_{n},v_1$. The Cuntz-Krieger relations (\ref{CKrel}) imply:
\begin{align*}
\omega(P_{v_1}) &= \omega \big( \sum_{e \in s^{-1}(v_{1})} S_{e}S_{e}^{*} \big) =  \sum_{e \in s^{-1}(v_{1})} \omega(S_{e}^{*}S_{e}) = \sum_{e \in s^{-1}(v_{1})} \omega(P_{r(e)}) \geq \omega(P_{v_{2}})\\
& = \sum_{e \in s^{-1}(v_2)}\omega(P_{r(e)}) \geq \omega(P_{v_3}) =
\cdots \geq \omega(P_{v_n}) =  \sum_{e \in s^{-1}(v_n)} \omega(P_{r(e)}) \geq \omega(P_{v_1}) .
\end{align*}
Hence we must have equality everywhere, which implies that
$\omega(P_{r(e)})=0$ if $e \in s^{-1}(v_{i})$ for some $i$, but
$e \notin \mu$. It follows from this that $\omega(P_w) = 0$ when
$$
w \in \bigcup_{C \in \mathcal C} \widehat{C} \ \backslash \bigcup_{C'
  \in \mathcal Z(0)} C' 
$$ 
where $\mathcal C$ is the set of components. Hence if $s$ is a sink in $G$ it follows that $\omega(P_s) = 0$ unless
$s \in \mathcal S(0)$. Consider a vertex $v \in N$. If $v$ is sink, $\omega(P_v) = 0$
and we are done. Otherwise, if $\omega(P_v) > 0$, the Cuntz-Krieger relations (\ref{CKrel})
implies that there is an edge $e_1\in s^{-1}(v)$ such that
$\omega(P_{r(e_1)}) > 0$. Then $r(e_1)$ can not be a sink and we can
find an edge $e_2$ such that $s(e_2) = r(e_1)$ and $\omega(P_{r(e_2)})
> 0$. We can continue this construction of edges $e_i$ indefinitely so there are $i < i'$ such that $s(e_i) = r(e_{i'})$, and
the path $e_ie_{i+1} \cdots e_{i'}$ is contained in a
component $C$. Since $\omega(P_{r(e_i)}) > 0$ this component must be
circular and without components in $\overline{C} \backslash C$, which contradicts that $v \in N$. It follows that
$\omega(P_v) = 0$.  
\end{proof}

 For each $C \in \mathcal{Z}(0)$, fix a vertex $v_{C} \in C$, and set $v_{s} =s$ for $s \in \mathcal{S}(0)$. For all $v \in
 \tilde{V}$ and $a \in \mathcal Z(0) \cup \mathcal S(0)$, we define:
\begin{equation*}
N^{a}_{v} = \{ \mu \in P_f(\tilde{G}) \ | \ s(\mu)=v \ , \ r(\mu)=v_{a}
\ , \ s(\mu_{i})\neq v_{a} \text{ for } i \leq \lvert \mu \rvert\} 
\end{equation*}
where the condition that $s(\mu_{i})\neq v_{a}$ is negligible when $\lvert \mu \rvert = 0$. We define $N^{a} = \bigcup_{v \in \tilde{V}} N^{a}_{v}$ for $a \in  \mathcal Z(0) \cup \mathcal S(0)$. 
\begin{lemma}\label{L13}
\begin{equation*}
C^{*}(\tilde{G}) \simeq  \Big(\bigoplus_{s \in \mathcal{S}(0)}M_{\#N^{s}}(\mathbb{C}) \Big) \oplus \Big( \bigoplus_{C \in \mathcal{Z}(0)} M_{\#N^{C}}(C(\T))\Big)
\end{equation*}
\end{lemma}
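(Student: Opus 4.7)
The plan is to construct, for each $a\in\mathcal{S}(0)\cup\mathcal{Z}(0)$, a $C^*$-subalgebra $\mathcal{B}_a\subseteq C^*(\tilde{G})$ via an explicit system of matrix units, show that different $\mathcal{B}_a$'s are orthogonal, and then argue they exhaust $C^*(\tilde{G})$.

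\emph{Step 1 (structural properties of $\tilde{G}$).} First I would establish two structural facts: (i) every sink of $\tilde{G}$ lies in $\mathcal{S}(0)$; (ii) for each circular $C\in\mathcal{Z}(0)$, every vertex on the loop $\delta$ of $C$ emits in $\tilde{G}$ only the next loop edge. Both follow from the definitions of $\mathcal{S}(0)$ and $\mathcal{Z}(0)$: an outgoing edge $e$ from a loop vertex $v\in C$ has $r(e)\in\tilde{V}$, so $r(e)\in\overline{a'}$ for some $a'\in\mathcal{S}(0)\cup\mathcal{Z}(0)$; the maximality conditions defining $\mathcal{S}(0)$ and $\mathcal{Z}(0)$ force $a'=C$ and $r(e)\in C$, and circularity of $C$ then forces $e$ to be the next loop edge. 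Consequence: in $\tilde{G}$ no $v_a$ reaches any $v_{a'}$ with $a'\neq a$.

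\emph{Step 2 (matrix units and their relations).} For $a=s$ a sink and $\mu,\nu\in N^s$, set $e^s_{\mu,\nu}=S_\mu S_\nu^*$. For $a=C$ circular with loop $\delta=\delta_1\cdots\delta_p$ at $v_C$, set $e^C_{\mu,\nu}(f)=S_\mu\,f(S_\delta)\,S_\nu^*$ for $f\in C(\mathbb{T})$; this makes sense because $S_\delta^*S_\delta=P_{v_C}$ is immediate, and iterating the Cuntz--Krieger relation using Step 1(ii) gives $P_{v_C}=S_{\delta_1}S_{\delta_1}^*=\cdots=S_\delta S_\delta^*$, so $S_\delta$ is a unitary in $P_{v_C}C^*(\tilde{G})P_{v_C}$ with full spectrum $\mathbb{T}$ by universality. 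For $\nu\in N^a$, $\mu'\in N^{a'}$, the product $S_\nu^*S_{\mu'}$ is nonzero only when $\nu$ and $\mu'$ are comparable, forcing a path in $\tilde{G}$ between $v_a$ and $v_{a'}$; by Step 1 such a path must be a power of a loop, and the condition ``$s(\mu_i)\neq v_a$'' defining $N^a$ then forces this power to be trivial. Hence $S_\nu^*S_{\mu'}=\delta_{a,a'}\delta_{\nu,\mu'}P_{v_a}$, which yields the expected matrix unit relations and the orthogonality $\mathcal{B}_a\mathcal{B}_{a'}=0$ for $a\neq a'$. This gives $\mathcal{B}_s\simeq M_{\#N^s}(\mathbb{C})$ and $\mathcal{B}_C\simeq M_{\#N^C}(C(\mathbb{T}))$.

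\emph{Step 3 (exhaustion).} The core identity to prove is
\begin{equation*}
P_v=\sum_{a\in\mathcal{S}(0)\cup\mathcal{Z}(0)}\sum_{\mu\in N^a_v}S_\mu S_\mu^*\qquad(v\in\tilde{V}).
\end{equation*}
For $v=v_a$ only the trivial path contributes, since by Step 1 no other stopping vertex is forward-reachable; for other $v$, iterated CK expansion terminates at stopping vertices because every forward path in $\tilde{G}$ either reaches a sink in $\mathcal{S}(0)$ or enters some loop in $\mathcal{Z}(0)$ (the only loops present) and reaches $v_C$ within one traversal. This identity places every $P_v$ in $\sum_a\mathcal{B}_a$. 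A similar compression $S_e=P_{s(e)}S_e P_{r(e)}$ expresses each edge generator as a sum of matrix units; loop edges are handled via $S_{\delta_i}=S_{\tau_i}S_{\tau_{i+1}}^*$ for $i\geq 2$ with $\tau_i=\delta_i\cdots\delta_p\in N^C$, and $S_{\delta_1}=S_\delta S_{\tau_2}^*=e^C_{v_C,v_C}(z)\,e^C_{v_C,\tau_2}(1)$. Since $\{P_v,S_e\}$ generate $C^*(\tilde{G})$, this proves $C^*(\tilde{G})=\bigoplus_a\mathcal{B}_a$, which is the claimed decomposition.

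\emph{Main obstacle.} I expect the most delicate point to be Step 1(ii), the assertion that loop vertices in $\tilde{G}$ emit only the next loop edge, since it is precisely what forces orthogonality of matrix units across different $a$'s in Step 2 and the termination of the CK expansion of $P_v$ in Step 3. Once this structural fact is secured via the definitions of $\mathcal{S}(0)$ and $\mathcal{Z}(0)$, the remaining computations are routine bookkeeping on paths and matrix units.
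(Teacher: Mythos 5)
Your argument is correct, but it runs in the opposite direction from the paper's proof, and the comparison is worth recording. The paper builds a Cuntz--Krieger $\tilde{G}$-family \emph{inside} the target algebra $\bigoplus_s M_{\#N^s}(\mathbb C)\oplus\bigoplus_C M_{\#N^C}(C(\mathbb T))$ out of abstract matrix units $e_{\alpha,\beta}$, obtains a $*$-homomorphism out of $C^*(\tilde{G})$ by universality, gets surjectivity because the images generate the target, and gets injectivity from the gauge-invariant uniqueness theorem (constructing a compatible circle action $\psi_\omega$ on the target). You instead realize the matrix units \emph{inside} $C^*(\tilde{G})$ as $S_\mu S_\nu^*$ and $S_\mu f(S_\delta)S_\nu^*$, prove the blocks are mutually orthogonal via $S_\nu^*S_{\mu'}=\delta_{a,a'}\delta_{\nu,\mu'}P_{v_a}$, and conclude by the exhaustion identity $P_v=\sum_a\sum_{\mu\in N^a_v}S_\mu S_\mu^*$ together with the expression of each $S_e$ in terms of matrix units. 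Your route dispenses with the gauge-invariant uniqueness theorem entirely; what it buys is a completely internal, hands-on identification of the summands, and your Step 3 identity is essentially the same computation the paper carries out elsewhere (in the lemma expressing $1_v=\sum_{\alpha\in M_v}S_\alpha S_\alpha^*$ for the corner argument). What it costs is that injectivity of $C(\mathbb T)\ni f\mapsto f(S_\delta)$ must be justified separately: your phrase ``full spectrum $\mathbb T$ by universality'' is the one place where a reader will want a sentence. The cleanest justification is that the gauge action fixes $P_{v_C}$ and sends $S_\delta\mapsto e^{ip t}S_\delta$, so $\operatorname{sp}(S_\delta)$ (computed in the corner $P_{v_C}C^*(\tilde{G})P_{v_C}$, where $S_\delta$ is unitary by your Step 1(ii)) is invariant under all rotations, hence equals $\mathbb T$; this also gives injectivity of each corner map $f\mapsto S_\mu f(S_\delta)S_\mu^*$ and therefore of $M_{\#N^C}(C(\mathbb T))\to\mathcal B_C$. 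With that sentence added, your Steps 1--3 are complete: the structural facts in Step 1 are exactly the right ones, and your identification of Step 1(ii) as the load-bearing point is accurate, since it is what makes $S_\delta S_\delta^*=P_{v_C}$, forces the first-hitting expansion to terminate, and kills all cross terms between blocks.
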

\begin{proof} For $a \in  \mathcal Z(0) \cup \mathcal S(0)$, let $e_{\alpha, \beta}, \alpha,\beta \in
  N^a$ be the standard matrix units in $M_{N^a}(\mathbb C) \simeq
  M_{\# N^a}(\mathbb C)$. For $v \in \tilde{V}$,
set
$$
\tilde{P}_v = \sum_{a \in \mathcal Z(0) \cup \mathcal S(0)} \sum_{\alpha \in N^a_v} e_{\alpha,\alpha} .
$$
Then $\tilde{P}_v, v \in \tilde{V}$, are mutually orthogonal projections. For each
$f \in \tilde{E}$ such that $s(f) \notin \left\{ v_a: \ a \in
  \mathcal Z(0) \cup \mathcal S(0)\right\}$, set
$$
\tilde{S}_f = \sum_{a \in \mathcal Z(0) \cup \mathcal S(0)} \sum_{\alpha \in N^a_{r(f)}} e_{f\alpha
  ,\alpha} .
$$
If $s(f) \in \left\{ v_a: \ a \in
   \mathcal Z(0) \cup \mathcal S(0)\right\}$, then $s(f) = v_C$ for some $C \in \mathcal Z(0)$, and we
let $\mu^C$ denote the unique shortest path in $\tilde{G}$ with $s(\mu^C) =
r(f)$ and $r\left(\mu^C\right) = v_C$. We define an element
$$
\tilde{S}_f \in C\left(\mathbb T, M_{N^C}(\mathbb C)\right)
$$
such that
$$
\tilde{S}_f(z) = ze_{v_C, \mu_C} .
$$
It is straightforward to verify that $\tilde{P}_v, v \in \tilde{V}$,
and $\tilde{S}_f, f \in \tilde{E}$, is a Cuntz-Krieger family,
i.e. they satisfy (\ref{CKrel}) relative to $\tilde{G}$. Since
$$
\tilde{P}_v, \tilde{S}_f \in  \Big(\bigoplus_{s \in
  \mathcal{S}(0)}M_{\#N^{s}}(\mathbb{C}) \Big) \oplus \Big(
\bigoplus_{C \in \mathcal{Z}(0)} M_{\#N^{C}}(C(\T))\Big)
$$
for all $v \in \tilde{V}$ and all $f\in \tilde{E}$, the universal
property of $C^*(\tilde{G})$ gives us a canonical $*$-homomorphism  
$$
C^*(\tilde{G})  \ \to \ \Big(\bigoplus_{s \in
  \mathcal{S}(0)}M_{\#N^{s}}(\mathbb{C}) \Big) \oplus \Big(
\bigoplus_{C \in \mathcal{Z}(0)} M_{\#N^{C}}(C(\T))\Big) .
$$
To show that this is an isomorphism, note first that it is surjective
because the target algebra is generated as a $C^*$-algebra by $\tilde{P}_v, v \in \tilde{V}$,
and $\tilde{S}_f, f \in \tilde{E}$. For the injectivity we shall
appeal to the gauge-invariant uniqueness theorem, Theorem 2.1 in
\cite{BPRS}. For a $a \in \mathcal{S}(0) \cup \mathcal{Z}(0)$, define for each $\omega \in \T$ the unitary:
$$
U^{a}_{\omega} = \sum_{\alpha \in N^{a}} \omega^{\lvert \alpha \rvert} e_{\alpha , \alpha}
$$
For $s \in \mathcal{S}(0)$ we define an automorphism
$\psi^{s}_{\omega}$ on $M_{\# N^{s}}(\mathbb{C})$ by
$\psi^{s}_{\omega}(A)=U_{\omega}^{s} A U_{\overline{\omega}}^{s}$, and
for $C \in \mathcal{Z}(0)$ we define an automorphism on $M_{\# N^{C}}(C(\T))$ by $\psi_{\omega}^{C}(f)(z)=U_{\omega}^{C}f(\omega^{\#C} z) U_{\overline{\omega}}^{C}$. It is straightforward to check that:
$$
\T \ni \omega \to \psi_{\omega}:=(\bigoplus_{s \in \mathcal{S}}\psi_{\omega}^{s}) \oplus (\bigoplus_{C \in \mathcal{Z}(0)}\psi_{\omega}^{C})
$$
is an action, and that we for $f\in \tilde{E}$ and $v\in \tilde{V}$ have:
$$
\psi_{\omega} (\tilde{S}_f) = \omega \tilde{S}_f \qquad \qquad \psi_{\omega}(\tilde{P}_v) = \tilde{P}_v
$$
for all $\omega \in \T$. It follows therefore from Theorem 2.1 in \cite{BPRS}
that the homomorphism under consideration is injective.

% For $z \in \mathbb T$, set
%$$
%U_z = \sum_{a \in \mathcal Z(0) \cup \mathcal S(0)} \sum_{\alpha \in N^{a}} z^{\lvert \alpha \rvert} e_{\alpha , \alpha},
%$$
%which is a unitary in the target algebra. It is straightforward to
%check that
%$$
%U_z\tilde{S}_fU_{-z} = z\tilde{S}_f
%$$
%for all $f\in \tilde{E}$ and
%$$
%U_z\tilde{P}_vU_{-z} = \tilde{P}_v
%$$
%for all $v\in \tilde{V}$. It follows therefore from Theorem 2.1 in \cite{BPRS}
%that the homomorphism under consideration is injective.

\end{proof}

\emph{Proof of Theorem \ref{L11}:} Consider $C \in \mathcal Z(0)$ and
let $C^*(G) \to M_{\# N^C}(C(\mathbb T))$ be the surjective
$*$-homomorphism obtained by composing the quotient map $C^*(G) \to
C^*(\tilde{G})$ with the projection $C^*(\tilde{G}) \to  M_{\#
  N^C}(C(\mathbb T))$ obtained from Lemma \ref{L13}. %$\overline{C}$ is
%a graph in itself, and Lemma \ref{L12} and Lemma \ref{L13} hold for
%$\overline{G}$ in place of $G$. For $\overline{G}$ the $N$ from Lemma \ref{L12} is empty, and since $N^{C}$ doesn't depend on whether we consider vertexes in $G$ or in $\overline{G}$, Lemma \ref{L13} gives that $C^*( \overline{C}) \simeq  M_{\# N^C}(C(\mathbb
%T))$. In the same way we see that $C^*(\overline{\{s\}}) \simeq M_{\#  N^s}(\mathbb C)$, and the statements
%regarding a trace state $\omega$ follow from Lemma \ref{L12} and Lemma \ref{L13}.
The kernel of
this $*$-homomorphism is the same as the kernel of $\pi_C : C^*(G) \to
C^*(\overline{C})$, namely the ideal generated by
$$
\left\{ P_v : \ v \notin \overline{C} \right\} .
$$
It follows that $C^*(\overline{C}) \simeq  M_{\# N^C}(C(\mathbb
T))$. In the same way we see that $C^*(\overline{\{s\}}) \simeq M_{\#
  N^s}(\mathbb C)$ when $s \in \mathcal S(0)$. The statements
regarding a trace state $\omega$ follow from Lemma \ref{L12} and Lemma
\ref{L13}.
\qed

\section{Ground states}

To describe the ground states we use again the groupoid picture
described in Section \ref{groupoid} in order to adapt the approach
from Section 5 in \cite{Th4} to the present setting. The fixed point algebra of $\alpha^F$ is the $C^*$-algebra of the open sub-groupoid
$$
\mathcal F = \left\{ (\mu x, |\mu| - |\mu'|, \mu'x)  : \ x \in
  \Omega_G, \ F(\mu) = F(\mu')  \right\} 
$$  
of $\mathcal G$.
The conditional expectation 
$$
Q :C^*(G) \to C^*_r(\mathcal F)
$$ 
extending the
restriction map $C_c(\mathcal G) \to C_c(\mathcal F)$ can be
described as a limit:
\begin{equation}\label{intX}
Q(a) = \lim_{R \to \infty} \frac{1}{R} \int_{0}^{R} \alpha^F_t(a) \ dt ,
\end{equation}
cf. the proof of Theorem 2.2 in \cite{Th3}.

When $x \in \Omega_G, z \in P_f(G)$, write $z \subseteq x$ when $1 \leq |z|$ and $x|_{[1,|z|]} = z$ or $|z| =0$ and $z = s(x)$. An
element $x \in \Omega_G$ \emph{has minimal $F$-weight} when the following holds:
$$
z,z' \in P_f(G), \ z \subseteq x, \ r(z') = r(z) \ \Rightarrow \ F\left(z'\right) \geq  F\left(z\right) .
$$ 
We denote the set of elements in $\Omega_G$ with minimal $F$-weight by
$\text{Min}(F,G)$. Then $\text{Min}(F,G)$ is closed in $\Omega_G$ and
$\mathcal F$-invariant in the sense that 
$$
(x,k,y)  \in \mathcal F, \ x \in \text{Min}(F,G)  \ \Rightarrow \ y \in \text{Min}(F,G) .
$$
It follows that the reduction $\mathcal F|_{\text{Min}(F,G)}$ of
$\mathcal F$ to $\text{Min}(F,G)$, defined by
$$
\mathcal F|_{\text{Min}(F,G)} = \left\{ (\mu x, |\mu| - |\mu'|,\mu' x)
  : \ x \in \Omega_G, \ F(\mu) = F(\mu'), \ \mu x \in \text{Min}(F,G)
\right\},
$$
is
a locally compact \'etale groupoid. Furthermore, there is a surjective
$*$-homomorphism 
$$
R : C^*_r(\mathcal F) \to C_r^*\left( \mathcal
  F|_{\text{Min}(F,G)}\right)
$$ 
extending the restriction map $C_c(\mathcal F) \to C_c\left( \mathcal
  F|_{\text{Min}(F,G)}\right)$. Now the proof of Theorem 5.3 in
\cite{Th4} can be repeated almost ad verbatim to yield the following.

\begin{thm}\label{L20} The map $\omega \mapsto \omega \circ R \circ Q$
  is an affine homeomorphism from the state space of $ C^*_r\left( \mathcal
  F|_{\text{Min}(F,G)}\right)$ onto the ground states of $\alpha^F$.
\end{thm}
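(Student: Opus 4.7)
My plan is to mimic the strategy from Section~5 of \cite{Th4}, proceeding in three main steps, and then deducing the homeomorphism from bijectivity together with weak$^*$-compactness.

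First, I would establish that every ground state $\omega$ of $\alpha^F$ satisfies $\omega=\omega\circ Q$. Here I would use the standard spectral characterization of ground states: $\omega$ is a ground state iff $\omega(ba)=0$ for all $b$ and all analytic $a$ lying in a spectral subspace of $\alpha^F$ for some $\lambda<0$. Taking $b=1$ and using $\omega(a^*)=\overline{\omega(a)}$ together with the fact that $a\in A_\lambda$ implies $a^*\in A_{-\lambda}$, one deduces $\omega(a)=0$ whenever $\Sp_{\alpha^F}(a)$ omits $0$. In combination with the ergodic average formula~(\ref{intX}) this forces $\omega=\omega\circ Q$.

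The second step is where I expect the main obstacle: I need to prove that $\omega\circ Q$ annihilates every element of $C^*_r(\mathcal F)$ supported on $\mathcal F\setminus\mathcal F|_{\operatorname{Min}(F,G)}$. By $\mathcal F$-invariance of $\operatorname{Min}(F,G)$ a bisection-and-partition-of-unity argument reduces this to showing $\omega(\chi_{Z(\nu)})=0$ whenever there exists $\nu'$ with $r(\nu')=r(\nu)$ and $F(\nu')<F(\nu)$, since such sets $Z(\nu)$ form a neighbourhood basis of $\Omega_G\setminus\operatorname{Min}(F,G)$. The crucial calculation is that $a:=S_{\nu'}S_\nu^*$ lies in the negative spectral subspace for $F(\nu')-F(\nu)<0$, so the ground state property gives
\[
0\,=\,\omega(a^*a)\,=\,\omega(S_\nu P_{r(\nu)}S_\nu^*)\,=\,\omega(S_\nu S_\nu^*)\,=\,\omega(\chi_{Z(\nu)}),
\]
which is precisely the transplant of the key argument from \cite{Th4} into the present graph-theoretic setting.

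For the third step, given a state $\psi$ on $C^*_r(\mathcal F|_{\operatorname{Min}(F,G)})$, I would verify the ground state condition for $\tilde\psi:=\psi\circ R\circ Q$ on spanning elements $a=S_\mu S_\nu^*$ with $F(\mu)<F(\nu)$ and $b=S_\rho S_\sigma^*$. A case analysis according to whether $\mu$ extends $\sigma$ or vice versa shows that whenever the matrix unit $Q(ba)$ is non-zero, its source path contains an initial segment that violates the minimality condition at the vertex $r(\mu)=r(\nu)$ (with $\mu$ or $\sigma$ serving as the witnessing shorter-weight path), so $R(Q(ba))=0$ and hence $\tilde\psi(ba)=0$. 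Injectivity of $\omega\mapsto \omega\circ R\circ Q$ is then immediate from surjectivity of $R\circ Q$ onto a dense $*$-subalgebra of $C^*_r(\mathcal F|_{\operatorname{Min}(F,G)})$. Since both state spaces are weak$^*$-compact, convex and Hausdorff, the continuous affine bijection is automatically the desired affine homeomorphism.
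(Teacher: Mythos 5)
Your reconstruction is correct and follows the same route the paper intends, namely the proof of Theorem 5.3 in \cite{Th4}: the spectral characterisation of ground states forces $\omega = \omega\circ Q$; the computation $\omega(S_\nu S_\nu^*) = \omega(a^*a) = 0$ for $a = S_{\nu'}S_\nu^*$ with $r(\nu')=r(\nu)$ and $F(\nu')<F(\nu)$ shows $\omega$ kills $C_0\left(\Omega_G\setminus \text{Min}(F,G)\right)$ and hence factors through $R$; and the case analysis on $S_\rho S_\sigma^* S_\mu S_\nu^*$ is exactly the right verification of the converse. Two small points to tighten: in your third step the witnessing shorter-weight path is always $\mu$ itself (the source path of $Q(ba)$ always begins with $\nu$, and $\mu$ witnesses that $\nu$ is not of minimal $F$-weight), and the passage from ``$\omega$ vanishes on $C_0\left(\Omega_G\setminus \text{Min}(F,G)\right)$'' to ``$\omega$ factors through $R$'' uses that $\ker R$ coincides with the ideal generated by these functions, which holds here because $\mathcal F$, being an open subgroupoid of a graph groupoid, is amenable.
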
 

The structure of the $C^*$-algebra $ C_r^*\left( \mathcal
  F|_{\text{Min}(F,G)}\right)$ varies a lot with the choice of
$F$. When $F$ is constant zero,
it is equal to $C^*(G)$, and when $F$ is strictly positive it is
isomorphic to $\mathbb C^n$, where $n$ is the number of sinks in
$G$. If $G$ consists of three edges, $e_i$, and a vertex $v$ with $r(e_i) =
s(e_i) = v, i =1,2,3$, and if $F(e_1) = F(e_2) = 0$ while $F(e_3) =1$,
we find that $C^*(G)$ is the Cuntz-algebra $O_3$ while $ C_r^*\left( \mathcal
  F|_{\text{Min}(F,G)}\right)$ is a copy of $O_2$. 

Which of the ground states are weak* limits, for $\beta \to \infty$,
of $\beta$-KMS states, can be decided by combining Theorem \ref{L20}
with Theorem \ref{MAIN}. It follows, for instance, that they all are
when $F = 1$, while none of them are in the last mentioned example.

\section{An example}

Consider the following graph $G$. The two sinks are $s_1$ and $s_2$
and there are four components labelled $C_1$ through $C_4$. In order to
define various functions on the edge set we have labelled four edges
$a,b,c$ and $d$.

\tikzstyle{input} = [draw, circle, fill, inner sep=1.5pt]
\begin{tikzpicture}[->,>=stealth',shorten >=1pt,auto,node distance=2cm,
                    thick,main node/.style={circle,draw,font=\sffamily\small\bfseries}]

  \node[input](1) [label=above:$s_{1}$]{} ;
  \node[input] (2) [right of =1] {};
  \node[input] (3) [right of =2] [label=below:$C_{1}$]{};
  \node[input] (4) [above of =3] {};
  \node[input] (5) [right of =3] [label=below:$C_{2}$] {};
  \node[input] (6) [above of =5] {};
  \node[input] (7) [right of =5]  {};
  \node[input] (8) [right of =7] [label=below:$C_{3}$] {};
  \node[input] (9) [right of =8] [label=below:$C_{4}$] {};
  \node[input] (10) [above of =9] {};
  \node[input] (11) [right of =9] [label=above:$s_{2}$] {};

  \path[every node/.style={font=\sffamily\large}, every loop/.style={min distance=36mm,in=45,out=135,looseness=10}]
    (2)  edge  node {} (1)
    (2)  edge node {} (3)
    (3)  edge [bend right,  looseness=1.5, out=-45, in=-90]node {} (4)
    (4)  edge [bend right, looseness=1.5, out=-90, in=-135] node {a} (3)
    (3)  edge  node {} (5)
    (5)  edge [bend right, looseness=1.5, out=-45, in=-90] node {b} (6)
    (6)  edge node {} (5)
    (6)  edge [bend right, looseness=1.5, out=-90, in=-135] node {} (5)
    (7)  edge node {} (5)
    (7)  edge node {} (8)
    (9)  edge node {} (8)
    (8)  edge  [loop] node {c} (8)
    (9)  edge node {} (11)
    (9)  edge [bend right, looseness=1.5, out=-45, in=-90] node {d} (10)
    (10)  edge node {} (9)
    (10)  edge [bend right, looseness=1.5, out=-90, in=-135] node {} (9);
\end{tikzpicture}

\smallskip

Consider first the gauge action where $F(e) = 1$ for all edges
$e$. The two sinks are both KMS sinks in this case; with intervals
$I_{s_1} = \mathbb R$ and $I_{s_2} = \left]\frac{\log 2}{2},
  \infty\right[$. Of the components it is only $C_2$ and $C_4$ that are KMS
  components, both of positive type and with $\beta_{C_2} =
  \beta_{C_4} = \frac{\log 2}{2}$. There are three extremal $\beta$-KMS
  states when $\beta = \frac{\log 2}{2}$, coming from $s_1,C_2$ and
  $C_4$, one when $\beta < \frac{\log 2}{2}$, coming from $s_1$, and
  two when $\beta > \frac{\log 2}{2}$, coming from $s_1$ and
  $s_2$. This 'KMS spectrum' away from $0$ can be
  described by the following figure.

\smallskip

\usetikzlibrary{arrows}
\begin{tikzpicture}
%\draw[latex-, very thick] (-7,0) -- (7,0);
%\draw[-latex, very thick] (-7,0) -- (7,0) node[below] {$\mathbb{R}$};
%\foreach \x in  {-4.16,0}
%\draw[shift={(\x,0)},color=black] (0pt,3pt) -- (0pt,-3pt);
%\draw[shift={(0,0)},color=black] (0pt,0pt) -- (0pt,-3pt) node[below] 
%{$0$};
%\draw[shift={(-4.16,0)},color=black] (0pt,0pt) -- (0pt,-3pt) node[below] 
%{$-\ln(4)$};
\draw[latex-, very thick] (-7,4) -- (7,4) ;
\draw[-latex, very thick] (-7,4) -- (7,4) node[below] {$\mathbb{R}$};
%\foreach \x in  {-2.08,0, 1.04}
%\draw[shift={(\x,4)},color=black] (0pt,3pt) -- (0pt,-3pt);
%\draw[shift={(0,4)},color=black] (0pt,0pt) -- (0pt,-3pt) node[below] 
%{$0$};
%\draw[shift={(-2.08,4)},color=black] (0pt,0pt) -- (0pt,-3pt) node[below] 
%{$-\ln(2)$};
\draw[shift={(1.04,4)},color=black] (0pt,0pt) -- (0pt,-3pt) node[below] 
{$\log(2)/2$};

\node at (-6.5,7.5) {$$};
\node at (7.3,4.5) {$C_{4}$};
\draw[fill] (1.04,4.5) circle (0.8mm);
\node at (7.3,5) {$C_{3}$};
\node at (7.3,5.5) {$C_{2}$};
\draw[fill] (1.04,5.5) circle (0.8mm);
\node at (7.3,6) {$C_{1}$};
\node at (7.3,6.5) {$s_{2}$};
\draw[o-] (0.94,6.5) -- (7,6.5);
\draw[-latex] (1.14,6.5) -- (7,6.5);
\node at (7.3,7) {$s_{1}$};
\draw[latex-] (-7,7) -- (7,7);
\draw[-latex] (-7,7) -- (7,7);
\end{tikzpicture}

\begin{center} \emph{KMS spectrum ($\beta \neq 0$) for the gauge action on $C^*(G)$.}
\end{center}
\smallskip

To define a different generalized gauge action, let $E$ be the set of
edges in $G$, and set $F_1(e) = 1$ when $e \in E \backslash \{a,b,c\}$
while $F_1(a) = F_1(b) = -2$ and $F_1(c) = 0$. If we describe the
KMS-spectrum for the action $\alpha^{F_1}$ by a diagram as was done for
the gauge action, the picture becomes the following. The red line
describes the contribution from the circular KMS component $C_3$ and hence
each point on it
represents a family of extremal KMS states parametrized by a circle.

\smallskip

\usetikzlibrary{arrows}
\begin{tikzpicture}
%\draw[latex-, very thick] (-7,0) -- (7,0);
%\draw[-latex, very thick] (-7,0) -- (7,0) node[below] {$\mathbb{R}$};
%\foreach \x in  {-4.16,0}
%\draw[shift={(\x,0)},color=black] (0pt,3pt) -- (0pt,-3pt);
%\draw[shift={(0,0)},color=black] (0pt,0pt) -- (0pt,-3pt) node[below] 
%{$0$};
%\draw[shift={(-4.16,0)},color=black] (0pt,0pt) -- (0pt,-3pt) node[below] 
%{$-\ln(4)$};
\draw[latex-, very thick] (-7,0) -- (7,0) ;
\draw[-latex, very thick] (-7,0) -- (7,0) node[below] {$\mathbb{R}$};
%\foreach \x in  {-2.08,0, 1.04}
%\draw[shift={(\x,4)},color=black] (0pt,3pt) -- (0pt,-3pt);
%\draw[shift={(0,4)},color=black] (0pt,0pt) -- (0pt,-3pt) node[below] 
%{$0$};
\draw[shift={(-2.08,0)},color=black] (0pt,0pt) -- (0pt,-3pt) node[below] 
{$-\log(2)$};
\draw[shift={(1.04,0)},color=black] (0pt,0pt) -- (0pt,-3pt) node[below] 
{$\log(2)/2$};

%\node at (-6.5,3.5) {$F_{1}$};
\draw[latex-] (-7,3) -- (7,3);
\draw[-latex] (-7,3) -- (7,3);
\node at (7.3, 3) {$s_{1}$};
\draw[o-] (0.94,2.5) -- (7,2.5);
\draw[-latex] (1.24,2.5) -- (7,2.5);
\node at (7.3, 2.5) {$s_{2}$};
\node at (7.3,2) {$C_{1}$};
\node at (7.3,1.5) {$C_{2}$};
\draw[fill] (-2.08,1.5) circle (0.8mm);
\node[red] at (7.3,1) {$C_{3}$};
\draw[o-, red] (0.94,1) -- (7,1);
\draw[-latex, red] (1.24,1) -- (7,1);
\node at (7.3,0.5) {$C_{4}$};
\draw[fill] (1.04,0.5) circle (0.8mm);

\end{tikzpicture}

\begin{center} \emph{KMS spectrum ($\beta \neq 0$) for the generalized gauge action
  $\alpha^{F_1}$ on $C^*(G)$.}
\end{center}
\smallskip

Finally we consider $F_2$ defined such that $F_2(e) = 1$ when $e \in
E\backslash \{a,d\}$, $F_2(a) = -1$ and $F_2(d) = -\frac{3}{2}$. For
the generalized gauge action $\alpha^{F_2}$ we find the following  KMS
spectrum.

\smallskip

\usetikzlibrary{arrows}
\begin{tikzpicture}
\draw[latex-, very thick] (-7,-4) -- (7,-4);
\draw[-latex, very thick] (-7,-4) -- (7,-4) node[below] {$\mathbb{R}$};
%\foreach \x in  {-4.16,-4}
%\draw[shift={(\x,0)},color=black] (0pt,3pt) -- (0pt,-3pt);
%\draw[shift={(0,0)},color=black] (0pt,0pt) -- (0pt,-3pt) node[below] 
%{$0$};
\draw[shift={(-4.16,-4)},color=black] (0pt,0pt) -- (0pt,-3pt) node[below] 
{$-\log(4)$};
%\draw[latex-, very thick] (-7,4) -- (7,4) ;
%\draw[-latex, very thick] (-7,4) -- (7,4) node[below] {$\mathbb{R}$};
%\foreach \x in  {-2.08,0, 1.04}
%\draw[shift={(\x,4)},color=black] (0pt,3pt) -- (0pt,-3pt);
%\draw[shift={(0,4)},color=black] (0pt,0pt) -- (0pt,-3pt) node[below] 
%{$0$};
%\draw[shift={(-2.08,4)},color=black] (0pt,0pt) -- (0pt,-3pt) node[below] 
%{$-\ln(2)$};
%\draw[shift={(1.04,4)},color=black] (0pt,0pt) -- (0pt,-3pt) node[below] 
%{$\ln(2)/2$};

%\node at (-6.5,-0.5) {$F_{2}$};
\draw[latex-] (-7,-1) -- (7,-1);
\draw[-latex] (-7,-1) -- (7,-1) ;
\node at (7.3,-1) {$s_{1}$};
\draw[latex-] (-7,-1.5) -- (-4.36,-1.5);
\draw[-o] (-7,-1.5) -- (-4.06,-1.5);
\node at (7.3,-1.5) {$s_{2}$};
\draw[latex-, red] (-7,-2) -- (7,-2);
\draw[-latex, red] (-7,-2) -- (7,-2);
\node[red] at (7.3,-2) {$C_{1}$};
\node at (7.3,-2.5) {$C_{2}$};
\node at (7.3,-3) {$C_{3}$};
\node at (7.3,-3.5) {$C_{4}$};
\draw[fill] (-4.16,-3.5) circle (0.8mm);

\end{tikzpicture}

\begin{center} \emph{KMS spectrum ($\beta \neq 0$) for the generalized gauge action
  $\alpha^{F_2}$ on $C^*(G)$.}
\end{center}

\smallskip

The structure of the ground states vary also for the three
actions. For the gauge action there are two extremal ground states coming
from the sinks, while for the actions $\alpha^{F_1}$ and
$\alpha^{F_2}$ there are infinitely
many. Concerning $\alpha^{F_1}$ the sinks still contribute two, but the infinite path
$c^{\infty}$ has minimal $F_1$-weight and contributes a family of
extremal ground states naturally parametrized by a circle. The sink $s_1$ is the only sink which gives rise to an extremal
ground state for the
action $\alpha^{F_2}$, but now the loop of period 2 beginning with the edge $a$ is an element of
$\text{Min}(F_2,G)$ and gives rise to a family of
extremal ground states naturally parametrized by a circle.

The $0$-KMS states are of course the same for all three actions. They
are the trace states on the algebra, and by using Theorem \ref{L11} we
see that they can be identified with the trace states on $M_2(\mathbb
C) \oplus M_3(C(\mathbb T))$, where the sink $s_1$ is responsible for
the first summand and the component $C_1$ for the second.

\end{document}